\theoremstyle{plain}
\newtheorem{theorem}{Theorem}[section]
\newtheorem{corollary}[theorem]{Corollary}
\newtheorem{lemma}[theorem]{Lemma}
\newtheorem{proposition}[theorem]{Proposition}
\theoremstyle{definition}
\newtheorem{definition}[theorem]{Definition}
\newtheorem{example}[theorem]{Example}
\theoremstyle{remark}
\newtheorem*{ack*}{Acknowledgment}
\begin{document}

\title[Relative transition classes]{The multiplicative inequality for class degrees via relative transition classes}

\author[S. Hong]{Soonjo Hong}
\address{Centro de Modelamiento Matem\'atico \\
    Universidad de Chile \\
    Av. Blanco Encalada 2120, Piso 7 \\
    Santiago de Chile \\
    Chile}
\email{hsoonjo@dim.uchile.cl}

\date{}
\subjclass[2010]{Primary 37B10}
\keywords{class degree, relative class degree, shift of finte type, factor code}
\begin{abstract}
  Generalizing the notion of the degree of a finite-to-one factor code from a shift of finite type, the class degree of a possibly infinite-to-one factor code extends many important properties of degree. In this paper, introducing relative class degree, we study how class degrees change as two factor codes are composited, in comparison with degrees.
\end{abstract}
\maketitle

\section{Introduction and backgrounds}

  A finite-to-one factor code $\pi$ from an irreducible shift of finite type onto a sofic shift $Y$ exhibit several nice behaviours. It preserves entropy, forbids graph diamonds, and, in particular, has \emph{degree}. The degree $d$ of $\pi$ is the minimal cardinality of the set of preimages of a point in $Y$. The well-known properties of $\pi$ related to the degree include: Every bi-transitive point in $Y$ has exactly $d$ preimages. They are bi-transitive in $X$ themselves and two distinct ones among the preimages are mutually separated and so on \cite{CovenP74,Hed69,KitMT91}.

  Much less is known about general, not necessarily finite-to-one factor codes. One notion called \emph{class degree} was devised to extend degree to general factor codes \cite{AllQ13}. It is defined to be the minimal cardinality of certain equivalence classes partitioning the preimages of a point in $Y$. The notion was shown to generalize degree in the sense that a finite-to-one factor code has the same class degree as its degree, and that for a 1-block factor code $\pi$ with the class degree $d$ the followings hold: Over every right transitive point in $Y$ exist exactly $d$ transition classes. Each of them contains a right transitive point of $X$ and two points from two distinct ones among the $d$ classes are mutually separated \cite{AllQ13,AllHJ13}.
  
  The multiplicative equality for the degree of a composition of finite-to-one factor codes, however, does not hold for class degree (See \hyperref[ex:multiplication_fails]{Example \ref{ex:multiplication_fails}}). Instead a multiplicative inequality was proven in \cite{AllHJ13} : Let $X$ and $Y$ be irreducible shifts of finite type, $Z$ a sofic shift and $\phi:X\to Y$, $\psi:Y\to Z$, $\pi:X\to Z$ factor codes with $\pi=\psi\circ\phi$. Then the class degree of $\pi$ is no greater than the multiplication of the class degrees of $\psi$ and $\phi$.
  
  In this paper we study the class degrees of the compositions of factor codes further. We look into a structure called \emph{relative transition class} which lies under the compositions. We reprove the multiplicative inequality, find how relative transition classes are organized in relation with transition classes and establish some important cases in which the multiplicative equality holds. The main result is as follows:
  \begin{theorem}\label{thm:main}
    Let $X$ and $Y$ be irreducible shifts of finite type, $Z$ a sofic shift and $\phi:X\to Y$, $\psi:Y\to Z$, $\pi:X\to Z$ factor codes with $\pi=\psi\circ\phi$. Then the class degree of $\pi$ is the class 	degree of $\psi$ multiplied by the class degree of $\phi$ relative to $\psi$.
  \end{theorem}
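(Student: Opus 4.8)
The plan is to fix a right-transitive point $z\in Z$ and to count the transition classes of $\pi$ over $z$ by organizing them over the transition classes of $\psi$ over $z$. First I would reduce to the case that $\phi$ and $\psi$, hence also $\pi$, are $1$-block codes; this is harmless, as a composition of $1$-block codes is again $1$-block and class degree (as well as the relative class degree) is a conjugacy invariant. Since $X$ is an irreducible shift of finite type, $Z$ is an irreducible sofic shift, so right-transitive points of $Z$ exist; fix such a $z$. By the results recalled in the introduction \cite{AllQ13,AllHJ13}, $\pi^{-1}(z)$ splits into exactly $d(\pi)$ transition classes and $\psi^{-1}(z)$ into exactly $d(\psi)$ transition classes, where I write $d(\cdot)$ for class degree.

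Next I would build a fibration of the $\pi$-classes over the $\psi$-classes. Since $\phi$ is $1$-block, any witness $w\in\pi^{-1}(z)$ for a $\pi$-transition from $x$ to $x'$ — a point agreeing with $x$ on $(-\infty,0]$ and with $x'$ on $[n,\infty)$ — pushes forward to a point $\phi(w)\in\psi^{-1}(z)$ that witnesses a $\psi$-transition from $\phi(x)$ to $\phi(x')$. Applying this along chains shows that if $x$ and $x'$ are $\pi$-transition equivalent over $z$, then $\phi(x)$ and $\phi(x')$ are $\psi$-transition equivalent over $z$, so $\phi$ induces a well-defined map $\overline{\phi}$ from the set of $\pi$-transition classes over $z$ to the set of $\psi$-transition classes over $z$. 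The map $\overline{\phi}$ is surjective, because $\phi$ is onto $Y$ and $\phi^{-1}\bigl(\psi^{-1}(z)\bigr)=\pi^{-1}(z)$. In particular, for each $\psi$-transition class $C$ over $z$ the set $\phi^{-1}(C)$ lies in $\pi^{-1}(z)$ and is a union of $\pi$-transition classes, namely those constituting the fiber $\overline{\phi}^{\,-1}(C)$.

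The structural heart of the argument is to identify $\overline{\phi}^{\,-1}(C)$ with the set of relative transition classes of $\phi$ relative to $\psi$ that lie inside $C$. One inclusion is formal: a relative transition between points of $\phi^{-1}(C)$ is realized by witnesses in $\phi^{-1}(C)\subseteq\pi^{-1}(z)$, hence is a $\pi$-transition. For the converse, if $w$ witnesses a $\pi$-transition between two points of $\phi^{-1}(C)$, then $\phi(w)$ agrees with a point of $C$ on $(-\infty,0]$; since any two points of $\psi^{-1}(z)$ that agree on $(-\infty,0]$ admit a $\psi$-transition between them (the constant witness does the job), $\phi(w)$ must itself belong to $C$, so $w\in\phi^{-1}(C)$ and the $\pi$-transition is already a relative transition. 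Hence the partition of $\phi^{-1}(C)$ into $\pi$-transition classes agrees with its partition into relative transition classes, and $\#\,\overline{\phi}^{\,-1}(C)$ is exactly the number of relative transition classes of $\phi$ relative to $\psi$ inside $C$.

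It remains to feed in the relative counterpart of the fact that right-transitive points realize the minimal class count: over a right-transitive $z$, every $\psi$-transition class $C$ over $z$ carries exactly $d(\phi\mid\psi)$ relative transition classes, where $d(\phi\mid\psi)$ denotes the class degree of $\phi$ relative to $\psi$. This should follow from the relative structure theory developed earlier in the paper — the relative versions of ``each class contains a right-transitive point'' and ``points in distinct classes are mutually separated'' — just as in the absolute case; if it is not already in place, this is the part that needs genuine work, by exhibiting $d(\phi\mid\psi)$ mutually (relatively) separated points inside $C$ and showing that no more can coexist over a right-transitive base. Granting this, the count closes:
\[
  d(\pi)=\#\{\pi\text{-transition classes over }z\}=\sum_{C}\#\,\overline{\phi}^{\,-1}(C)=\sum_{C}d(\phi\mid\psi)=d(\psi)\,d(\phi\mid\psi),
\]
the sum being over the $d(\psi)$ transition classes $C$ of $\psi$ over $z$. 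I expect this last ingredient — the uniformity and minimality of the relative class count over right-transitive points — to be the main obstacle; the remainder is essentially bookkeeping with $1$-block codes.
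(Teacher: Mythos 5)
Your reduction and bookkeeping essentially reproduce Theorem~\ref{thm:classes_and_relative_classes}: $\phi$ sends each $\pi$-class over $z$ into a single $\psi$-class, the induced map $\overline{\phi}$ is onto, and the $\pi$-classes inside $\phi^{-1}(C)$ coincide with the $\phi/\psi$-classes there. That part is fine except for one incorrect justification: it is false in general that two points of $\psi^{-1}(z)$ agreeing on $(-\infty,0]$ are $\psi$-equivalent, and the ``constant witness'' does not do the job, since a $\psi$-transition requires $(\psi,m)$-bridges for \emph{every} $m\in\mathbb Z$ while the point itself provides them only for $m\le 0$; left-asymptotic preimages need not be equivalent. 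Your conclusion survives because you should use the other end of the bridge: $\phi(w)$ is right-asymptotic to $\phi(x')\in C$ and has the same $\psi$-image, and right-asymptotic preimages of a common point are always equivalent (this is exactly how the paper argues in Theorem~\ref{thm:classes_and_relative_classes} and Lemma~\ref{lmm:transition_blk_makes_a_transition}); alternatively, over a right-transitive $z$ one can invoke mutual separation of distinct classes.

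The genuine gap is the step you explicitly defer: that each $\psi$-class $C$ over $z$ carries exactly $d_{\phi/\psi}$ relative classes. This is not a finishing touch; it is the content of the theorem, and it splits into two statements your sketch conflates. First, over a single right-transitive point $y\in C$ there are exactly $d_{\phi/\psi}$ $\phi/\psi$-classes; the paper obtains this only after developing relative routability and relative depth (Lemma~\ref{lmm:transition_blk_makes_a_transition}, Lemma~\ref{lmm:degree_and_depth}, Proposition~\ref{prop:reldegree_of_a_point}, Corollary~\ref{cor:degree_right_transitive_point}). Second, every $\pi$-class contained in $\phi^{-1}(C)$ must actually meet $\phi^{-1}(y)$ for that one chosen $y$: a priori a class could lie over other points of $C$ and miss the fibre of $y$ entirely, because partial preimages cannot in general be extended, i.e.\ $\psi$-bridges from $\phi(x)$ to $y$ need not lift to $\pi$-bridges. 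Your proposed strategy (exhibit $d_{\phi/\psi}$ mutually separated points inside $C$ and show no more can coexist over a right-transitive base) aims at the first statement but does not touch the second. The paper secures the quantitative heart by a different device in the proof of Theorem~\ref{thm:division_multiplicity}: using a magic block of $\phi$ and the partition of its preimage set from Theorem~\ref{thm:block_partition}, it transports a maximal family of $q$ pairwise non-$\phi$-equivalent points lying in one $\pi$-class from one fibre to every other fibre, concludes that every $\pi$-class is built from exactly $q=d_\phi/d_{\phi/\psi}$ $\phi$-classes, and then counts. As it stands, your argument reduces the theorem to an unproved assertion essentially equivalent to the theorem itself.
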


  We introduce basic terminology and known results. For more details on symbolic dynamics, see \cite{LM}.

  An \emph{alphabet} $\mathcal{A}$ is a finite set whose elements are called \emph{symbols}. A \emph{block} of length $n\in\mathbb{Z}^+$ over $\mathcal{A}$ is a concatenation of $n$ symbols from $\mathcal{A}$. The \emph{empty} block of length 0 is assumed to exist. A \emph{shift space} $X$ over $\mathcal{A}$ is a closed $\sigma$-invariant subset of $\mathcal{A}^\mathbb{Z}$, endowed with the product topology, where $\sigma:\mathcal{A}^\mathbb{Z}\to\mathcal{A}^\mathbb{Z}$ is given by $\sigma(x)|_i=x|_{i+1}$. If for any $u,v$ in the set $\mathcal{B}(X)$ of all blocks occurring in the points of $X$ there is a block $w$ with $uwv$ in $\mathcal{B}(X)$, then $X$ is said to be \emph{irreducible}. In this paper every block starts with coordinate 1 so that a block $w$ of length $l$ is written to be $w|_{[1,l]}$.

  A point $x$ of $X$ is \emph{right transitive} if every block in $\mathcal{B}(X)$ appears in $x|_{[0,\infty)}$.  Two points $x$ and $x'$ of $X$ are \emph{right asymptotic} if $x|_{[n,\infty)}=x'|_{[n,\infty)}$ for some $n\in\mathbb Z$. \emph{Left transitive points} and \emph{left asymptotic points} are defined analogously. A point is \emph{bi-transitive} if it is both left and right transitive and two points are \emph{bi-asymptotic} if they are both left and right asymptotic. If $x|_i\ne x'|_i$ for all $i\in\mathbb{Z}$, then $x$ and $x'$ are said to be \emph{mutually separated} .

  We call a continuous $\sigma$-commuting map between shift spaces a \emph{code}, a surjective one a \emph{factor code} and a bijective one a \emph{conjugacy}. Shift spaces $X$ and $Y$ are \emph{conjugate} if there is a conjugacy between them. A code $\pi:X\to Y$ is \emph{1-block} if $x|_0$ determines $\pi(x)|_0$ for any point $x$ in $X$. A 1-block code $\pi:X \to Y$ induces a map from $\mathcal{B}(X)$ into $\mathcal{B}(Y)$, also denoted by $\pi$. Every code $\pi:X\to Y$ is \emph{recoded} to some 1-block code $\tilde\pi:\tilde X\to Y$, where $\tilde X$ is conjugate to $X$ by a conjugacy commuting with $\pi$. We call $\pi$ \emph{finite-to-one} if $\pi^{-1}(y)$ is finite for all $y$ in $Y$. An image of an irreducible shift space under a factor code is also irreducible.
  
  If for some $k\ge0$ every block of length $k$ in $\mathcal{B}(X)$ satisfies that whenever $uw$ and $wv$ are in $\mathcal{B}(X)$ so is $uwv$, then $X$ is called a (\emph{$k$-step}) \emph{shift of finite type}. Up to conjugacy, every shift of finite type is 1-step. A \emph{sofic shift} is an image of a shift of finite type by a factor code.

\section{Relative transition class and relative class degreess}\label{sec:relative_class_degrees}
  
  Let $\pi:X\to Y$ be a 1-block factor code from an irreducible 1-step shift of finite type $X$ over $\mathcal{A}$ onto a sofic shift $Y$, unless stated otherwise.

  \begin{definition}\label{defn:degree}
    Given $m\in\mathbb Z,y$ in $Y$ and $x,x'\in X$ with $\pi(x)=\pi(x')$ a $(\pi,m)$-\emph{bridge from $x$ to $x'$} is a point $\vec x$ in $X$ such that $x|_{(-\infty,m]}=\vec x|_{(-\infty,m]},\vec x|_{[n,\infty)}=x'|_{[n,\infty)}$ and $\pi(\vec{x})=\pi(x)=\pi(x')$ for some $n>m$. A pair of $(\pi,m)$-bridges from $x$ to $x'$ and from $x'$ to $x$ is called a {\em two-way} $(\pi,m)$-{\em bridge between $x$ and} $x'$. A $\pi$-\emph{transition from $x$ to $x'$} is a sequence $\{\vec{x}_m\}_{m\in\mathbb Z}$ of $(\pi,m)$-bridges from $x$ to $x'$. When there is a $\pi$-transition from $x$ to $x'$, we write that $x\to_\pi x'$.
    
    We say that $x$ and $x'$ are $\pi$-\emph{equivalent} and write that $x\sim_\pi x'$ if $x\to_\pi x'$ and $x'\to_\pi x$. The equivalence class $[x]_\pi$ of $x$ up to $\sim_\pi$ is called a $\pi$-\emph{(transition) class}. Set $\llbracket y\rrbracket_\pi=\{[x]_\pi\mid x\in\pi^{-1}(y)\}$ and $d_\pi(y)=|\llbracket y\rrbracket_\pi|$ for $y$ in $Y$. The \emph{class degree} $d_\pi$ of $\pi$ is defined to be $d_\pi=\min_{y\in Y}d_\pi(y).$
  \end{definition}

  If $\pi$ is finite-to-one, then $d_\pi$ equals the {\em degree} of $\pi$, which is the minimal cardinality of the set of preimages of a point in $Y$. In most cases, the term \emph{class} will be omitted, which are justified as class degree generalizes degree in a natural way \cite{AllQ13,AllHJ13}.

  Degree can be dealt locally in terms of \emph{routability}.
  \begin{definition}\label{defn:transition_block}
    A block $w$ in $\mathcal{B}(Y)$ is said to be $\pi$-\emph{presented through $M$ at $n$} for some $1\le n\le|w|$ and $M\subset\mathcal{A}$ if for every $u$ in $\pi^{-1}(w)$ there is some $v$ in $\pi^{-1}(w)$ with $u|_1=v|_1,u|_{|w|}=v|_{|w|}$ and $v|_n$ in $M$, in which case $u$ is said to be $\pi$-\emph{routable through $v|_n$ at $n$}. Define the $\pi$-\emph{depth} $d_\pi(w)$ of $w$ by
    \[ d_\pi(w)=\min\{|M|\mid w\text{ is $\pi$-presented through $M\subset\mathcal{A}$ at $n$ for some }1\le n\le|w|\}.\]
    A $\pi$-presented block $w$ is said to be \emph{minimal} if  $d_\phi(w)=\min_{v\in\mathcal{B}(Y)}d_\phi(v)$.
  \end{definition}
  
  It was shown in \cite{AllQ13} that $d_\phi=d_\phi(w)$ when $w$ is minimal. The following are among the most important properties of transition classes and class degrees.
  \begin{theorem}\label{thm:class_transitive_point}\cite{AllQ13,AllHJ13}
    Let $X$ be a 1-step irreducible shift of finite type and $Y$ a sofic shift with a 1-block factor code $\pi:X\to Y$. Let $y$ in $Y$ be right transitive (resp., bi-transitive). Then the following hold:
    \begin{enumerate}
      \item There are exactly $d_\pi$ $\pi$-classes over $y$.
      \item Any $\pi$-class over $y$ contains a right transitive (resp., bi-transitive) point.
      \item Two points from distinct $\pi$-classes over $y$  are mutually separated.
    \end{enumerate}
  \end{theorem}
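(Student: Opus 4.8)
The plan is to run the whole argument over a fixed right transitive (resp.\ bi-transitive) point $y$ and to extract all three conclusions from one cut-and-paste device. After recoding we may assume $X$ is $1$-step and $\pi$ is $1$-block; note that $Y$, being a factor of an irreducible shift, is irreducible, so every block of $\mathcal B(Y)$ occurs infinitely often to the right in $y$. Fix a minimal $\pi$-presented block $w$; by the identity (recalled above) that $d_\pi(w)=d_\pi$ for minimal $w$, we have $d_\pi(w)=d_\pi=:d$, and $w$ is $\pi$-presented through some $M=\{a_1,\dots,a_d\}\subseteq\mathcal A$ at a position $1\le\nu\le|w|$. First I would isolate a \emph{splicing lemma}: if $x,x'\in\pi^{-1}(y)$ and $y|_{[p,p+|w|-1]}=w$ for some $p$ at which both $x$ and $x'$ are $\pi$-routable through one and the same symbol $a\in M$, then there is a two-way $(\pi,m)$-bridge between $x$ and $x'$ for every $m\le p-1$. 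Indeed, if $u,v\in\pi^{-1}(w)$ witness these routings (so $u|_1=x|_p$, $u|_{|w|}=x|_{p+|w|-1}$, $v|_1=x'|_p$, $v|_{|w|}=x'|_{p+|w|-1}$ and $u|_\nu=v|_\nu=a$), then since $X$ is $1$-step the block $u|_{[1,\nu]}v|_{[\nu+1,|w|]}$ lies in $\mathcal B(X)$ and maps to $w$; placing it at coordinates $[p,p+|w|-1]$ with $x|_{(-\infty,p-1]}$ on its left and $x'|_{[p+|w|,\infty)}$ on its right produces a point of $X$ over $y$ that is a $(\pi,m)$-bridge from $x$ to $x'$ for all $m\le p-1$, and the bridge in the other direction is built symmetrically. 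Since $w$ recurs in $y$ at positions $p_1<p_2<\cdots\to\infty$, two preimages of $y$ that route through a common symbol of $M$ at infinitely many $p_k$ are $\pi$-equivalent.

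Part~(1) then follows at once. Here $d_\pi(y)\ge d$ is trivial, since $d_\pi=\min_{y'}d_\pi(y')$. For the reverse inequality, suppose $x_0,\dots,x_d$ were pairwise non-$\pi$-equivalent over $y$. At each $p_k$ every $x_l$ routes through some symbol of $M$, so, $|M|$ being $d$, some pair $l\neq l'$ routes through a common symbol at $p_k$; only finitely many pairs occur, hence one fixed pair does so at infinitely many $k$ and is therefore $\pi$-equivalent by the splicing lemma — a contradiction. So $d_\pi(y)=d$; the bi-transitive case is identical (only right transitivity of $y$ is used).

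For part~(2) I would produce a right transitive (resp.\ bi-transitive) representative of a given $\pi$-class $E$ over $y$ by a diagonal construction: enumerating $\mathcal B(X)=\{u_1,u_2,\dots\}$, build $x^{(0)},x^{(1)},\dots$ in $E$ with $x^{(k+1)}|_{[-k,k]}=x^{(k)}|_{[-k,k]}$ and with $u_{k+1}$ occurring in $x^{(k+1)}|_{(k,\infty)}$ (and, in the bi-transitive case, also in $x^{(k+1)}|_{(-\infty,-k)}$), so that $\xi:=\lim_k x^{(k)}$ is as required, provided each passage $x^{(k)}\rightsquigarrow x^{(k+1)}$ is performed by a $\pi$-transition whose switching takes place beyond coordinate $k$, so that the composed transitions pass to the limit and keep $\xi$ in $E$. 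Such a passage is carried out by grafting, using irreducibility of $X$, a preimage reading $u_{k+1}$ onto a far-right tail of $x^{(k)}$ (possible since $\pi(u_{k+1})$ occurs arbitrarily far to the right in $y$) and then re-routing back into $E$ via the splicing lemma at a recurrence of $w$ still further to the right.

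Finally part~(3): suppose $x\in E_j$ and $x'\in E_{j'}$ over $y$ with $x|_i=x'|_i$ for some $i$; I must show $j=j'$. The pasted point $z:=x|_{(-\infty,i]}x'|_{(i,\infty)}$ lies over $y$ and agrees with $x$ on $(-\infty,i]$ and with $x'$ on $[i,\infty)$, so the constant transitions $\vec z_m\equiv z$ and $\vec{x'}_m\equiv x'$ give $z\to_\pi x'$ and $x'\to_\pi z$ (valid as $z$ and $x'$ agree on $[i,\infty)$), whence $z\in E_{j'}$. It therefore suffices to show that two preimages of $y$ that agree on an entire left ray (here $z$ and $x$) lie in the same $\pi$-class. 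This is where I expect the main obstacle to be: the pasting trick reduces (3) to this left-asymptotic statement but does not prove it, and to prove it one must show that over a right transitive $y$ two such preimages necessarily route through a common symbol of a minimal $M$ at cofinally many recurrences of $w$, so that the splicing lemma applies. Pinning this down — relating the distribution of routings at recurrences of $w$ to the precisely $d$ classes supplied by part~(1) and to the minimality of $w$, as carried out in \cite{AllQ13,AllHJ13} — is the delicate step; the rest is the bookkeeping above.
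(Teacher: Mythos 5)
There is no in-paper proof to compare against: Theorem \ref{thm:class_transitive_point} is quoted as a background result from \cite{AllQ13,AllHJ13}, so your proposal has to stand on its own as a proof, and as such it is incomplete. What you do prove is part (1): your splicing lemma is sound (it is exactly the paper's Lemma \ref{lmm:transition_blk_makes_a_transition} with $\psi=\mathrm{id}$), and the pigeonhole argument at recurrences of a minimal presented block correctly gives $d_\pi(y)\le d_\pi$, hence equality. This mirrors the routability machinery the paper itself deploys for the relative analogues (Lemma \ref{lmm:transition_blk_makes_a_transition}, Proposition \ref{prop:reldegree_of_a_point}).

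Parts (2) and (3), however, contain genuine gaps. For (3) you reduce mutual separation to the statement that two preimages of $y$ agreeing on a left ray are $\pi$-equivalent, and then you explicitly concede that you do not prove this and defer it to \cite{AllQ13,AllHJ13}; but that left-asymptotic statement \emph{is} the content of (3) (bridges from $x$ to $z$ at levels $m$ beyond the agreement ray are exactly what is at stake), so nothing has been established beyond a reformulation. For (2), the grafting step is not justified at two points. First, the fact that $\pi(u_{k+1})$ occurs far to the right in $y$ does not yield a preimage of $y$ reading $u_{k+1}$ over that occurrence: a preimage block of a block of $y$ need not extend to a preimage point of $y$, which is precisely the difficulty the paper handles by a compactness/extension argument in the proof of Lemma \ref{lmm:degree_and_depth} (the passage from $I$ to $I'$ there). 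Second, even granting each passage $x^{(k)}\rightsquigarrow x^{(k+1)}$ inside $E$, your limit point $\xi$ only agrees with the $x^{(k)}$ on central windows (and in the bi-transitive case the left rays are also modified), so membership $\xi\in E$ does not follow from the transitions used along the way; one needs separation/closedness of the finitely many classes over $y$, i.e.\ essentially part (3) again, or a construction that controls where the switching occurs on both sides. So the proposal is a correct proof of (1) plus an accurate map of where the real work in (2) and (3) lies, but it does not supply that work.
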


  For more details on class degrees, see \cite{AllHJ13,AllQ13}. From now on our interest is moving on relative transition and relative class degree. Throughout the rest of the paper $X$ and $Y$ are assumed to be 1-step irreducible shifts of finite type and $Z$ a sofic shift with 1-block factor codes $\phi:X\to Y,\psi:Y\to Z$ and $\pi=\psi\circ\phi$, unless stated otherwise.
  \begin{definition}\label{defn:relative_class_degree}
    Given $m\in\mathbb Z$ and $x,x'\in X$ with $\phi(x)\sim_\psi\phi(x')$ a $(\phi/\psi,m)$-\emph{bridge from $x$ to $x'$} is a point $\vec x\in X$ such that $x|_{(-\infty,m]}=\vec x|_{(-\infty,m]},\vec x|_{[n,\infty)}=x'|_{[n,\infty)}$ and $\phi(x)\sim_\psi\phi(x')\sim_\psi\phi(\vec x)$ for some $n>m$.
    A $\phi/\psi$-\emph{transition from $x$ to $x'$} is a sequence $\{\vec{x}_m\}_{m\in\mathbb Z}$ of $(\phi/\psi,m)$-bridges from $x$ to $x'$. When there is a $\phi/\psi$-transition from $x$ to $x'$, we write that $x\to_{\phi/\psi} x'$.
    
    We say that $x$ and $x'$ are $\phi/\psi$-\emph{equivalent} and write that $x\sim_{\phi/\psi} x'$ if $x\to_{\phi/\psi} x'$ and $x'\to_{\phi/\psi} x$. The equivalence class $[x]_{\phi/\psi}$ of $x$ up to $\sim_{\phi/\psi}$ is called a $\phi/\psi$-\emph{(transition) class}. Set $\llbracket y\rrbracket_{\phi/\psi}=\{[x]_{\phi/\psi}\mid x\in\phi^{-1}(y)\}$ and $d_{\phi/\psi}(y)=|\llbracket y\rrbracket_{\phi/\psi}|$ for $y$ in $Y$. The \emph{degree} $d_{\phi/\psi}$ of $\phi$ \emph{relative to} $\psi$ is defined to be $d_{\phi/\psi}=\min_{y\in Y}d_{\phi/\psi}(y).$
  \end{definition}

  If $\psi$ is the identity then $[x]_{\phi/\psi}$ is just $[x]_\phi$. Also note that relative degrees are always smaller than or equal to degrees. We define \emph{relative routability} as well.
  \begin{definition}\label{defn:relative_transition_block}
    A block $w$ in $\mathcal{B}(Y)$ is said to be $\phi/\psi$-\emph{presented through $M$ at $n$} for some $1\le n\le|w|$ and $M\subset\mathcal{A}$ if for every $u\in\phi^{-1}(w)$ there is some $v$ in $\pi^{-1}(\psi(w))$ with $u|_1=v|_1,u|_{|w|}=v|_{|w|}$ and $v|_n$ in $M$, in which case $u$ is said to be $\phi/\psi$-\emph{routable through $v|_n$ at $n$}. Define the $\phi/\psi$-\emph{depth} $d_{\phi/\psi}(w)$ of $w$ by
    \[ d_\pi(w)=\min\{|M|\mid w\text{ is $\phi/\psi$-presented through $M\subset\mathcal{A}$ at $n$ for some }1\le n\le|w|\}.\]
    A $\phi/\psi$-presented block $w$ is said to be \emph{minimal} if $d_{\phi/\psi}(w)=\min_{v\in\mathcal{B}(Y)}d_{\phi/\psi}(v)$.
  \end{definition}

  If both $\phi$ and $\psi$ are finite-to-one then we have the equality $d_\pi=d_\phi d_\psi$. In infinite-to-one cases the equality fails in as a simplest example as follows:

  \begin{example}\label{ex:multiplication_fails}\cite{AllHJ13}
    Let $X$ be the full shift over $\mathcal{A}=\{0,1\}$ and $Y = \{ 0^\infty \}$ and consider the trivial map $\pi:X\to Y$. Letting $\psi = \pi$ and $\phi:X\to X$ by $\phi(x)|_i = x|_i + x|_{i+1} \mod 2$, we have $\pi=\psi\circ\phi$. However, $d_\pi=1<d_\phi d_\psi=2$.
  \end{example}

  Still one will see that an inequality holds for relative degrees. The first relation we will see between transition classes and relative transition classes is that they are indeed the same regarded as the subsets of $X$.
  \begin{theorem}\label{thm:classes_and_relative_classes}
    Let $X$ and $Y$ be 1-step irreducible shifts of finite type and $Z$ a sofic shift with 1-block factor codes $\phi:X\to Y,\psi:Y\to Z$ and $\pi=\psi\circ\phi$. For all $x$ in $X$ we have that $[x]_\phi\subset[x]_{\phi/\psi}=[x]_\pi$. Any two points in a single $\pi$-class are sent by $\phi$ into the same $\psi$-classes, and $[x]_{\phi/\psi}=\bigsqcup_{x'\sim_{\phi/\psi} x}[x']_\phi$.
  \end{theorem}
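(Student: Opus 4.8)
The plan is to prove the three inclusions $[x]_\phi\subset[x]_{\phi/\psi}$, $[x]_{\phi/\psi}\subset[x]_\pi$ and $[x]_\pi\subset[x]_{\phi/\psi}$, read off the statement about $\phi$-images along the way, and then obtain the $\phi$-class decomposition by a formal argument. The first two inclusions are definition-chasing. Note that $\sim_\psi$ is reflexive (the point $y$ is a $(\psi,m)$-bridge from $y$ to itself for every $m$) and that $y\sim_\psi y'$ forces $\psi(y)=\psi(y')$, because equality of $\psi$-images is built into the definition of a $(\psi,m)$-bridge; hence $\phi(x)\sim_\psi\phi(x')$ implies $\pi(x)=\pi(x')$. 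Consequently, if $x\sim_\phi x'$ then any $(\phi,m)$-bridge $\vec x$ from $x$ to $x'$ has $\phi(\vec x)=\phi(x)=\phi(x')$, so $\phi(x)\sim_\psi\phi(x')\sim_\psi\phi(\vec x)$ trivially and $\vec x$ is a $(\phi/\psi,m)$-bridge; and if $\vec x$ is a $(\phi/\psi,m)$-bridge from $x$ to $x'$ then $\phi(\vec x)\sim_\psi\phi(x)\sim_\psi\phi(x')$ gives $\pi(\vec x)=\pi(x)=\pi(x')$, and since the coordinate conditions coincide, $\vec x$ is a $(\pi,m)$-bridge. Passing from bridges to transitions to equivalences, this yields $[x]_\phi\subset[x]_{\phi/\psi}\subset[x]_\pi$.

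For the remaining inclusion one uses two observations. First, since $\phi$ is $1$-block, the image under $\phi$ of a $(\pi,m)$-bridge $\vec x$ from $x$ to $x'$ agrees with $\phi(x)$ on $(-\infty,m]$, agrees with $\phi(x')$ on $[n,\infty)$, and has $\psi$-image $\pi(x)=\pi(x')$; thus $\phi(\vec x)$ is a $(\psi,m)$-bridge from $\phi(x)$ to $\phi(x')$. Applying this to an entire $\pi$-transition from $x$ to $x'$, and to one from $x'$ to $x$, shows that $x\sim_\pi x'$ implies $\phi(x)\sim_\psi\phi(x')$; this is precisely the claim that the $\phi$-images of the points of one $\pi$-class lie in a single $\psi$-class. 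Second, if $y_1,y_2$ in $Y$ are right asymptotic and $\psi(y_1)=\psi(y_2)$, then $y_1\sim_\psi y_2$: the point $y_1$ is itself a $(\psi,k)$-bridge from $y_1$ to $y_2$ for every $k$ (take the transition point large enough that $y_1$ and $y_2$ already agree beyond it), and symmetrically $y_2$ is one from $y_2$ to $y_1$.

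Combining these, suppose $x\sim_\pi x'$. The first observation gives $\phi(x)\sim_\psi\phi(x')$, and for any $(\pi,m)$-bridge $\vec x$ from $x$ to $x'$ the image $\phi(\vec x)$ is right asymptotic to $\phi(x')$ with $\psi(\phi(\vec x))=\pi(x')$, so the second observation gives $\phi(\vec x)\sim_\psi\phi(x')$; as $\sim_\psi$ is an equivalence relation \cite{AllQ13}, we get $\phi(x)\sim_\psi\phi(x')\sim_\psi\phi(\vec x)$, so $\vec x$ is a $(\phi/\psi,m)$-bridge. Hence $x\to_{\phi/\psi}x'$, and symmetrically $x\sim_{\phi/\psi}x'$, proving $[x]_\pi\subset[x]_{\phi/\psi}$ and therefore $[x]_{\phi/\psi}=[x]_\pi$; in particular $\sim_{\phi/\psi}$ coincides with the equivalence relation $\sim_\pi$. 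Lastly $[x]_{\phi/\psi}\supset\bigcup_{x'\sim_{\phi/\psi}x}[x']_\phi$ since $[x']_\phi\subset[x']_{\phi/\psi}=[x]_{\phi/\psi}$, the reverse containment holds because $x'\in[x']_\phi$, and distinct $\phi$-classes are disjoint, which is the asserted identity $[x]_{\phi/\psi}=\bigsqcup_{x'\sim_{\phi/\psi}x}[x']_\phi$. I expect the only point requiring care to be this last inclusion: one must route through the \emph{target} $\phi(x')$, exploiting that images of bridges are right asymptotic to it (routing through the source would only give left asymptoticity, which does not suffice), and invoke transitivity of $\sim_\psi$ from the known theory; everything else is unwinding of definitions.
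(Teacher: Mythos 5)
Your proposal is correct and follows essentially the same route as the paper's proof: both directions are handled by chasing the bridge definitions, showing a $(\phi/\psi,m)$-bridge is a $(\pi,m)$-bridge because $\sim_\psi$ forces equal $\psi$-images, and conversely that a $(\pi,m)$-bridge is a $(\phi/\psi,m)$-bridge, with the final decomposition $[x]_{\phi/\psi}=\bigsqcup_{x'\sim_{\phi/\psi}x}[x']_\phi$ a formal consequence. The only difference is that you spell out what the paper dismisses with ``Clearly'' (that $\phi$-images of $\pi$-bridges are $\psi$-bridges, that right-asymptotic points with equal $\psi$-image are $\psi$-equivalent, and transitivity of $\sim_\psi$), which is a welcome but not essentially different elaboration.
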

  \begin{proof}
    Fix any $z$ in $Z,x$ in $\pi^{-1}(z)$ and choose $x'$ from $[x]_{\phi/\psi}$. Given $m\in\mathbb Z$ there is a $(\phi/\psi,m)$-bridge $\vec x_m$ from $x$ to $x'$. Since $\phi(x)\sim_\psi\phi(x')\sim_\psi\phi(\vec x_m)$, applying $\psi$ to the all sides $\pi(x)=\pi(x')=\pi(\vec x_m)=z$. So $\{\vec x_m\}_{m\in\mathbb Z}$ is a $\pi$-transition from $x$ to $x'$. Similarly, $x'\to_\pi x$, and $[x]_{\phi/\psi}\subset[x]_\pi$.
  
    To show the other inclusion, choose $x'$ from $[x]_\pi$. Given $m\in\mathbb Z$ find a $(\pi,m)$-bridge $\vec x_m$ from $x$ to $x'$. Clearly, $\phi(x)\sim_\psi\phi(x')\sim_\psi\phi(\vec x_m)$ so that $\{\vec x_m\}$ is a $\phi/\psi$-transition, too. Similarly, $x'\to_{\phi/\psi} x$, and $[x]_{\phi/\psi}=[x]_\pi$.
  
    Finally, fix any $y\in Y$ and choose $x,x'$ from $\phi^{-1}(y)$ with $x\sim_\phi x'$. It is easy to see that $x\sim_\pi x'$ and $[x]_\phi\subset[x]_\pi$. The second statement follows immediately.
  \end{proof}

  Next we establish the relation between relative degree and relative depth, and further show that transitive points realize relative degree, as in the case of degree.

  \begin{lemma}\label{lmm:transition_blk_makes_a_transition}
    Let $X$ and $Y$ be 1-step irreducible shifts of finite type and $Z$ a sofic shift with 1-block factor codes $\phi:X\to Y,\psi:Y\to Z$ and $\pi=\psi\circ\phi$. Let $y$ in $Y$ be such that $w=y|_{(0,|w|]}$ is $\phi/\psi$-presented through $M$ at $n$, and $x$ and $x'$ in $\phi^{-1}([y]_\psi)$ satisfy that both $x|_{(0,|w|]}$ and $x'|_{(0,|w|]}$ are $\phi/\psi$-routable through $a$ in $M$ at $n$. Then there is a two-way $(\phi/\psi,0)$-bridges between $x'$ and $x$.
  \end{lemma}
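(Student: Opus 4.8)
The plan is to produce the two required bridges explicitly, splicing each from three pieces: a left tail of one of the points, a short detour block glued together from the two routing words the hypothesis provides, and a right tail of the other point. The detour is built so that it passes from one point's symbol at coordinate $1$ to the other point's symbol at coordinate $|w|$ through the common routing symbol $a$ at coordinate $n$, which is precisely where the two routing words can be joined.

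First I would unpack the hypotheses. Write $\ell=|w|$. Since $w$ is $\phi/\psi$-presented through $M$ at $n$ and the blocks $x|_{(0,\ell]}$ and $x'|_{(0,\ell]}$ --- which therefore lie in $\phi^{-1}(w)$, so that $\phi(x)|_{(0,\ell]}=w=\phi(x')|_{(0,\ell]}$ and hence $\pi(x)|_{(0,\ell]}=\psi(w)=\pi(x')|_{(0,\ell]}$ --- are $\phi/\psi$-routable through $a\in M$ at $n$, there are blocks $v,v'\in\pi^{-1}(\psi(w))$ with $v|_1=x|_1,\ v|_\ell=x|_\ell,\ v|_n=a$ and $v'|_1=x'|_1,\ v'|_\ell=x'|_\ell,\ v'|_n=a$. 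Moreover, as $\phi(x)$ and $\phi(x')$ lie in $[y]_\psi$ we have $\phi(x)\sim_\psi\phi(x')$, whence $\pi(x)=\psi(\phi(x))=\psi(\phi(x'))=\pi(x')$.

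Next I would define $\vec x\in\mathcal A^{\mathbb Z}$ by $\vec x|_i=x|_i$ for $i\le 0$, $\vec x|_i=v|_i$ for $1\le i\le n$, $\vec x|_i=v'|_i$ for $n<i\le\ell$, and $\vec x|_i=x'|_i$ for $i>\ell$, and check that $\vec x\in X$: as $X$ is $1$-step this only needs the transitions across the seams $0/1$, $n/n{+}1$, $\ell/\ell{+}1$ to be legal, which holds since $v|_1=x|_1$, $v|_n=a=v'|_n$, and $v'|_\ell=x'|_\ell$ respectively (the cases $n=1,\ell$, where a piece may be empty, are similar). By construction $\vec x|_{(-\infty,0]}=x|_{(-\infty,0]}$ and $\vec x|_{[\ell,\infty)}=x'|_{[\ell,\infty)}$ with $\ell>0$. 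It remains to get the $\psi$-class condition $\phi(x)\sim_\psi\phi(x')\sim_\psi\phi(\vec x)$. I would compute $\pi(\vec x)$ coordinatewise, using that $\pi$ is $1$-block: it equals $\pi(x')|_i$ for $i\le 0$; $\pi(v)|_i=\psi(w)|_i=\pi(x')|_i$ for $1\le i\le n$; $\pi(v')|_i=\psi(w)|_i=\pi(x')|_i$ for $n<i\le\ell$; and $\pi(x')|_i$ for $i>\ell$. Hence $\pi(\vec x)=\pi(x')$, i.e.\ $\psi(\phi(\vec x))=\psi(\phi(x'))$; and $\phi(\vec x)|_{[\ell,\infty)}=\phi(x')|_{[\ell,\infty)}$ because $\phi$ is $1$-block. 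So $\phi(\vec x)$ is right asymptotic to $\phi(x')$ with the same $\psi$-image, and any two right asymptotic points of $Y$ sharing a $\psi$-image are $\psi$-equivalent (take the point itself as every required $(\psi,m)$-bridge); therefore $\phi(\vec x)\sim_\psi\phi(x')$, and with $\phi(x)\sim_\psi\phi(x')$ this is exactly what is needed, so $\vec x$ is a $(\phi/\psi,0)$-bridge from $x$ to $x'$. The mirror construction --- splice $x'|_{(-\infty,0]}$, then $v'|_{[1,n]}$, then $v|_{(n,\ell]}$, then $x|_{[\ell,\infty)}$ --- yields a $(\phi/\psi,0)$-bridge from $x'$ to $x$ by the identical argument, and the two together form the asserted two-way $(\phi/\psi,0)$-bridge.

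I expect the only genuinely delicate point to be the $\psi$-class condition: one has to notice that the spliced point is forced to have $\pi$-image $\pi(x)=\pi(x')$ --- this is exactly why the routing words are taken from $\pi^{-1}(\psi(w))$ rather than $\phi^{-1}(w)$, and why they must be joined precisely at the shared symbol $a$ --- after which the elementary remark that right asymptotic points with equal $\psi$-image are $\psi$-equivalent finishes everything. The remainder is just the bookkeeping of confirming that the concatenation is a legitimate point of the $1$-step shift of finite type carrying the prescribed tails.
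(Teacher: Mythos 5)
Your proof is correct and follows essentially the same route as the paper's: splice $x|_{(-\infty,0]}$, $v|_{[1,n]}$, $v'|_{(n,|w|]}$, $x'|_{(|w|,\infty)}$ using the shared routing symbol $a$, verify membership in the $1$-step shift at the seams, and conclude $\phi(\vec x)\sim_\psi\phi(x')$ from right asymptoticity plus equality of $\psi$-images, then mirror the construction for the reverse bridge. Your version merely spells out a couple of steps the paper leaves implicit (the middle seam and the remark that right asymptotic points with equal $\psi$-image are $\psi$-equivalent).
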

  \begin{proof}
    Say $u=x|_{(0,|w|]}$ and $u'=x'|_{(0,|w|]}$, for the convenience. There are two blocks $v$ and $v'$ in $\pi^{-1}(\psi(w))$ such that $u|_1=v|_1,u|_{|w|}=v|_{|w|}$ and $u'|_1=v'|_1,u'|_{|w|}=v'|_{|w|}$ and $v|_n=v'|_n=a$. Define $\vec{x}$ by
    \[ \vec{x}|_{(-\infty,0]}=x|_{(-\infty,0]},\vec{x}|_{(0,|w|]}=v|_1\cdots v|_{n-1}av'|_{n+1}\cdots v'|_{|w|}\text{ and }\vec{x}|_{(|w|,\infty)}=x'|_{(|w|,\infty)}. \]
    As $\vec{x}|_1=v|_1=u|_1=x|_1$ and $\vec{x}|_{|w|}=v'|_{|w|}=u'|_{|w|}=x'|_{|w|}$, the point $\vec{x}$ is in $X$. Since $\vec x$ and $x'$ are right asymptotic, so are $\phi(\vec{x})$ and $\phi(x')$. They also have the same image under $\psi$ as $\pi(v)=\pi(v')=\psi(w)=\pi(u)=\pi(u')$. Therefore $\phi(\vec x)\sim_\psi\phi(x')\sim_\psi\phi(x)$ and $\vec{x}$ turns out to be a $(\phi/\psi,0)$-bridge from $x$ to $x'$. A $(\phi/\psi,0)$-bridge from $x'$ to $x$ is found in a similar way.
  \end{proof}

  Denote by $C|_A$ the set $\{x|_A\mid x\in C\}$ for a subset $C$ of $X$ and a subset $A$ of $\mathbb{Z}$. The following theorem will be useful for \hyperref[lmm:degree_and_depth]{Lemma \ref{lmm:degree_and_depth}}. It is shown in the proof of Theorem 4.22 of \cite{AllQ13}. \hyperref[lmm:degree_and_depth]{Lemma \ref{lmm:degree_and_depth}} and \hyperref[lmm:degree_and_depth]{Proposition \ref{prop:reldegree_of_a_point}} are reminiscent of the theorem.

  \begin{theorem}\label{thm:degree_and_depth}\cite{AllQ13}
    Let $\pi:X\to Z$ be a 1-block factor code from an irreducible shift of finite type $X$. Let $z$ be in $Z$ and $N\in\mathbb{N}$. Then there are integers $m'>m>N$ and $0<n\le m'-m$ such that for any $C$ in $\llbracket z\rrbracket_\pi$ we have a symbol $a$ in the alphabet $\mathcal{A}_X$ of $X$ through which all the blocks in $C|_{[m,m']}$ are $\pi$-routable at $n$.
  \end{theorem}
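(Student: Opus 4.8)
The plan is to argue by a pigeonhole over ever longer windows of $z$, the mechanism being that two preimages of $z$ lying in one $\pi$-class are, at every level, joined by $(\pi,m)$-bridges in both directions, so that splicing such bridges reroutes the two points through a common symbol over a bounded stretch.

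First I would handle a single class $C\in\llbracket z\rrbracket_\pi$. Fix $x_{0}\in C$ and a level $m>N$. By the $1$-step property, whether a $(\pi,m)$-bridge from a point $x$ to $x_{0}$ can equal $x_{0}$ from a prescribed level on is determined by the symbol $x|_{m}$ alone (with $x_{0},z,m$ fixed); since $C$ realizes only finitely many symbols at $m$, there is a level $n>m$ such that for every $x\in C$ some $(\pi,m)$-bridge from $x$ to $x_{0}$ equals $x_{0}$ on $[n,\infty)$, in particular equals $x_{0}|_{n}$ at $n$. Splicing such a bridge with a $(\pi,n)$-bridge from $x_{0}$ back to $x$ yields a point of $\pi^{-1}(z)$ that agrees with $x$ on $(-\infty,m]$, equals $x_{0}|_{n}$ at $n$, and agrees with $x$ again from some level $n_{2}(x)>n$ on; reading this over the window $[m,n_{2}(x)]$ shows that $x|_{[m,n_{2}(x)]}$, hence $x|_{[m,m']}$ for every $m'\ge n_{2}(x)$, is $\pi$-routable through $a:=x_{0}|_{n}$ at $n$. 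Now $n_{2}(x)$ depends only on $x|_{[m,\infty)}$, and as a function of that right-infinite block it is locally constant and everywhere finite on $C$; hence it stays finite on the closure $K$ of $\{x|_{[m,\infty)}:x\in C\}$, and, $K$ being compact, it is bounded on $K$, say by $M$. Consequently, for any $m'>\max(n,M)$, every block of $C|_{[m,m']}$ is $\pi$-routable through $a$ at $n$ (in the block's own coordinates this position is $n-m+1\in(0,m'-m]$, as required).

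It remains to make $m$, $m'$, and $n$ uniform over $\llbracket z\rrbracket_\pi$, which I expect to be the crux: the level $n$ and the bound $M$ were produced from $C$, so if $\llbracket z\rrbracket_\pi$ were infinite one could not simply intersect the per-class admissible windows. I would therefore first establish that $\llbracket z\rrbracket_\pi$ is finite — a point that I expect needs its own argument, showing that distinctness of $\pi$-classes over $z$ is already detected by bounded-length data — and then take $m>N$ fixed, $n$ above all the finitely many per-class thresholds, and $m'$ above $n$ and all the finitely many bounds $M$; each class $C$ then supplies its own routing symbol $a=x_{0}^{C}|_{n}$, which yields the integers $m'>m>N$ and $0<n\le m'-m$ asserted by the theorem.
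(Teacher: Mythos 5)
Your per-point work is fine, but the theorem's entire difficulty lies in the uniformity you dispose of by compactness, and that step does not hold up. For fixed $m$ the extraction of a uniform level $n$ and common symbol $a=x_0|_n$ is correct, because the relevant data is the single symbol $x|_m$, of which there are finitely many; and each individual $x\in C$ then has a finite threshold $n_2(x)$ beyond which $x|_{[m,m']}$ routes through $a$. The problem is the passage from ``$n_2$ finite at every point of $C$'' to ``$n_2$ bounded on $C$''. Since the sublevel set $\{n_2\le k\}$ is determined by the coordinates $[m,k]$, the function is lower semicontinuous, and boundedness on the compact set $K$ would indeed follow \emph{if} $n_2$ were finite at every point of $K$, i.e.\ on the closure of $C|_{[m,\infty)}$. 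But you merely assert that it ``stays finite on the closure'': a transition class is in general not a closed set, a limit of tails of points of $C$ is the tail of some preimage of $z$ that need not lie in $C$ nor be $\pi$-equivalent to $x_0$, and your splicing construction requires a bridge from $x_0$ back to $x$, which is exactly what is unavailable for such limit points. Finiteness on the closure is essentially a restatement of the uniform routability you are trying to prove, so the argument begs the question at its central point; lower semicontinuity alone permits $n_2$ to be finite on a dense, non-closed subset and unbounded there.

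The other ingredient you defer, finiteness of $\llbracket z\rrbracket_\pi$ for an arbitrary (possibly non-transitive) $z$, is also not a side issue: in the literature the bound on the number of transition classes over an arbitrary point is obtained from uniform routing statements of precisely this kind, so proving finiteness first and then deducing the theorem risks circularity unless you give a genuinely independent argument for it. Note finally that the present paper offers no proof to compare against -- it quotes the statement from the proof of Theorem 4.22 of \cite{AllQ13} -- so your proposal must stand on its own, and as written it does not: the supremum over the class, which is the heart of the matter, is not justified.
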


  \begin{lemma}\label{lmm:degree_and_depth}
    Let $X$ and $Y$ be 1-step irreducible shifts of finite type and $Z$ a sofic shift with 1-block factor codes $\phi:X\to Y,\psi:Y\to Z$ and $\pi=\psi\circ\phi$. Then for any $y$ in $Y$ there is a block $v$ occurring in $y$ such that $d_{\phi/\psi}(v)\le d_{\phi/\psi}(y)$.
  \end{lemma}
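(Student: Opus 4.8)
The plan is to take $v$ to be a sufficiently long subword of $y$ and to read off its $\phi/\psi$-depth from the routing data that \autoref{thm:degree_and_depth} supplies for the composite code $\pi$ over the point $z:=\psi(y)$.

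First I would identify what $d_{\phi/\psi}(y)$ measures. By \autoref{thm:classes_and_relative_classes} we have $[x]_{\phi/\psi}=[x]_\pi$ for all $x\in X$, so $\llbracket y\rrbracket_{\phi/\psi}=\{[x]_\pi:x\in\phi^{-1}(y)\}$, and hence $r:=d_{\phi/\psi}(y)$ is exactly the number of $\pi$-classes over $z$ that meet $\phi^{-1}(y)$; call these the \emph{relevant} classes $C_1,\dots,C_r\in\llbracket z\rrbracket_\pi$. Applying \autoref{thm:degree_and_depth} to the code $\pi\colon X\to Z$, the point $z$ and (say) $N=0$ gives $m'>m>0$, an index $0<n\le m'-m$ and, for each $\pi$-class $C$ over $z$, a symbol $a_C\in\mathcal A$ through which every block of $C|_{[m,m']}$ is $\pi$-routable at $n$. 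Put $v:=y|_{[m,m']}$, which occurs in $y$ and satisfies $\psi(v)=z|_{[m,m']}$, and put $M:=\{a_{C_i}:1\le i\le r\}$, so that $|M|\le r$. I claim $v$ is $\phi/\psi$-presented through $M$ at $n$; granting this, $d_{\phi/\psi}(v)\le|M|\le r=d_{\phi/\psi}(y)$, as required. To prove the claim take any $u\in\phi^{-1}(v)$, so that $\pi(u)=\psi(v)=z|_{[m,m']}$. If $u$ is the restriction to $[m,m']$ of some $x\in\phi^{-1}(y)$, then $C:=[x]_\pi$ is relevant, $u=x|_{[m,m']}\in C|_{[m,m']}$ is $\pi$-routable at $n$ through $a_C\in M$, and any block witnessing this routing lies in $\pi^{-1}(z|_{[m,m']})=\pi^{-1}(\psi(v))$ and has the same first and last symbols as $u$; so it certifies that $u$ is $\phi/\psi$-routable through $a_C\in M$.

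The real point --- and the main obstacle --- is that a general $u\in\phi^{-1}(v)$ need not be the restriction of a point of $\phi^{-1}(y)$: a block may map into the window $y|_{[m,m']}$ without being completable, within $\phi^{-1}(y)$, to a bi-infinite point, and such a $u$ could then be forced through a symbol belonging to a non-relevant class (or to none of the tracked symbols), making $M$ too large. I would rule this out by choosing the window endpoints carefully. Write $L_k$ (resp.\ $R_k$) for the symbols occurring at coordinate $k$ in some left-infinite (resp.\ right-infinite) point of $\phi^{-1}\!\big(y|_{(-\infty,k]}\big)$ (resp.\ $\phi^{-1}\!\big(y|_{[k,\infty)}\big)$); then $L_k\cap R_k=\{x|_k:x\in\phi^{-1}(y)\}=:P_k$, and whenever $u|_1\in L_m$ and $u|_{|v|}\in R_{m'}$ one may glue a left extension, $u$ itself, and a right extension to obtain $x\in\phi^{-1}(y)$ with $x|_{[m,m']}=u$. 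Since $\{u|_1:u\in\phi^{-1}(y|_{[m,m']})\}$ shrinks to $R_m$ and $\{u|_{|v|}:u\in\phi^{-1}(y|_{[m,m']})\}$ shrinks to $L_{m'}$ as the window lengthens, it suffices to use a left endpoint $m$ with $R_m=P_m$ and a right endpoint $m'$ with $L_{m'}=P_{m'}$, taken far enough apart for both stabilizations; such coordinates occur arbitrarily far in each direction by a follower-set/compactness argument (the relevant families of follower sets are nested in the window length, hence eventually constant, and a limit argument identifies the stable value with $P_k$). Concretely, one first applies \autoref{thm:degree_and_depth} to get a window $[m_1,m_1']$ with its routing data and then enlarges it to such a good pair $[m,m']$; enlarging merely glues each routing block, one symbol at a time, to fixed pieces of the ambient point on either side, so it preserves the routing conclusion of \autoref{thm:degree_and_depth} up to a shift of the distinguished position, after which the previous paragraph applies verbatim.
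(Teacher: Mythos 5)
Your overall route is the paper's: apply Theorem~\ref{thm:degree_and_depth} to $\pi$ over $z=\psi(y)$, keep only the $\pi$-classes meeting $\phi^{-1}(y)$ (by Theorem~\ref{thm:classes_and_relative_classes} these are the $\phi/\psi$-classes over $y$), take $v$ a long subword of $y$, and convert $\pi$-routability into $\phi/\psi$-routability. You also correctly isolate the one real difficulty: a block $u\in\phi^{-1}(v)$ need not be the restriction of any point of $\phi^{-1}(y)$. But your fix has a genuine gap. You need coordinates $m$, $m'$ with $R_m=P_m$ and $L_{m'}=P_{m'}$, and you assert such coordinates ``occur arbitrarily far in each direction'' by a follower-set/compactness argument. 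The nested families do stabilize, but only to $R_m$ (resp.\ $L_{m'}$), and there need not exist \emph{any} coordinate where $R_m\subseteq L_m$. Concretely, let $X$ have symbols $a,b,c$ with allowed transitions $a\to b$, $a\to c$, $b\to b$, $b\to c$, $c\to a$, $c\to c$, let $\phi$ send $a,b\mapsto 0$, $c\mapsto 1$ (then $Y$ is the full $2$-shift), let $\psi$ be the identity, and take $y=0^\infty$ (a recurrent point, which is exactly the case Proposition~\ref{prop:reldegree_of_a_point} needs). The fibre over $y$ only uses the subgraph $a\to b$, $b\to b$, so for every $m$ one has $R_m=\{a,b\}$ while $L_m=P_m=\{b\}$; moreover $ab^{k}\in\phi^{-1}(0^{k+1})$ for every $k$, yet it is never the restriction of a point of $\phi^{-1}(y)$, however the window is chosen. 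So the strategy of forcing every $u\in\phi^{-1}(v)$ to extend to a full preimage of $y$ cannot succeed.

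The missing idea is that routability does not require extendability of $u$ itself: the routing block only has to be a $\pi$-preimage of $\psi(v)$ agreeing with $u$ in its first and last symbols. The paper therefore keeps the routing window $I=[1,l]$ of Theorem~\ref{thm:degree_and_depth} fixed and enlarges the block \emph{outward}: by compactness there is $m$ such that every $u\in\phi^{-1}\bigl(y|_{[1-m,l+m]}\bigr)$ agrees on $I$ with some point of $\phi^{-1}(y)$, hence with a point of one of the relevant classes --- compactness can control the interior of $u$ but, as the example shows, never its endpoints. One then routes $u|_I$ through the symbol attached to that class and splices the resulting block between $u|_{[1-m,0]}$ and $u|_{[l+1,l+m]}$; since $X$ is $1$-step and the splice retains $u$'s own first and last symbols, this exhibits $y|_{[1-m,l+m]}$ as $\phi/\psi$-presented through at most $d_{\phi/\psi}(y)$ symbols at the shifted coordinate. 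Your closing step (``the previous paragraph applies verbatim'') rests on the false extendability statement, so as written the proof does not close, though everything before and after that point is sound and matches the paper.
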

  \begin{proof}
    Let $z=\psi(y)$. By \hyperref[thm:degree_and_depth]{Theorem \ref{thm:degree_and_depth}} for all $k\in\mathbb{N}$ there are integers $l_k<r_k,1\le n_k\le r_k-l_k$ and a set $M_k\subset\mathcal{A}_X$ with $|M_k|\le d_\pi(z)$ such that for $I_k=[l_k,r_k]$ and any $C$ in $\llbracket z\rrbracket_\pi$ we have a symbol in $M_k$ through which all the blocks in $C|_{I_k}$ are $\pi$-routable at $n$.

    List the elements of $\llbracket y\rrbracket_{\phi/\psi}$ as $C_1,\cdots,C_d$ where $d=d_{\phi/\psi}(y)$. By \hyperref[thm:classes_and_relative_classes]{Theorem \ref{thm:classes_and_relative_classes}} they are $\pi$-classes over $z$ at the same time. We may assume that $I=I_1=[1,l]$. Put $n=n_1,M=M_1$ and $v=y|_I$. All the blocks $u$ in $C_j|_I$ are $\pi$-routable through a single symbol of $M$ at $n$ for each $1\le j\le d$. Yet it is possible that $\phi^{-1}(v)\setminus\bigcup_jC_j|_I$ is not empty. To avoid the problem extend $I$ to $I'=[1-m,l+m]$ for sufficiently large $m$. By compactness there are some $m>0,1\le p\le m+1$ and $1\le q\le m+1$ such that $\pi^{-1}(y|_{I'})|_{[p,l+2m-q+1]}=\bigcup_jC_j|_{[-m+p,l+m-q+1]}$. Reset to be $y|_{I'}$ the block $v$ still is $\pi$-presented through $M'=M\cap\bigcup_jC_{j=1}^d|_n$ at $N+n$. As each $M\cap C_j$ has a unique element $d_{\phi/\psi(v)}\le|M'|\le d$ and $v$ is a desired block.
  \end{proof}

  A \emph{recurrent} point of a shift space is a point every block of which occurs infinitely often to its right. A right transitive point is always recurrent.

  \begin{proposition}\label{prop:reldegree_of_a_point}
    Let $X$ and $Y$ be 1-step irreducible shifts of finite type and $Z$ a sofic shift with 1-block factor codes $\phi:X\to Y,\psi:Y\to Z$ and $\pi=\psi\circ\phi$. Let $y$ be a recurrent point of $Y$ and $d'=\min\{d_{\phi/\psi}(w)\mid w\text{ is a block occurring in }y\}$. Then $d_{\phi/\psi}(y)=d'$.
  \end{proposition}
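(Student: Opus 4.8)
The plan is to prove the two inequalities $d'\le d_{\phi/\psi}(y)$ and $d_{\phi/\psi}(y)\le d'$ separately. The first is immediate and does not use recurrence: by \hyperref[lmm:degree_and_depth]{Lemma \ref{lmm:degree_and_depth}} there is a block $v$ occurring in $y$ with $d_{\phi/\psi}(v)\le d_{\phi/\psi}(y)$, and then $d'\le d_{\phi/\psi}(v)\le d_{\phi/\psi}(y)$ by the definition of $d'$. So the real content is the reverse inequality $d_{\phi/\psi}(y)\le d'$, and this is where recurrence of $y$ is essential: we need a block of minimal relative depth to reappear arbitrarily far to the right so that we can manufacture bridges at every level $m\in\mathbb Z$.

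For the reverse inequality, fix a block $w$ occurring in $y$ with $d_{\phi/\psi}(w)=d'$, witnessed by a set $M\subseteq\mathcal A$ with $|M|=d'$ and a position $n$ through which $w$ is $\phi/\psi$-presented. Since $y$ is recurrent, $w$ occurs in $y$ at positions $j_1<j_2<\cdots$ with $j_k\to\infty$; write $y|_{[j_k+1,\,j_k+|w|]}=w$. For $x\in\phi^{-1}(y)$ the block $x|_{[j_k+1,\,j_k+|w|]}$ lies in $\phi^{-1}(w)$, so by the definition of $\phi/\psi$-presentedness it is $\phi/\psi$-routable at $n$ through some symbol of $M$; let $S_x(k)\subseteq M$ be the (nonempty) set of all such symbols. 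The key claim is: \emph{if $x,x'\in\phi^{-1}(y)$ satisfy $S_x(k)\cap S_{x'}(k)\ne\emptyset$ for infinitely many $k$, then $x\sim_{\phi/\psi}x'$.} To see this, note that whenever $a\in S_x(k)\cap S_{x'}(k)$, applying \hyperref[lmm:transition_blk_makes_a_transition]{Lemma \ref{lmm:transition_blk_makes_a_transition}} to the shifted points $\sigma^{j_k}x$, $\sigma^{j_k}x'$ (both lying over $\sigma^{j_k}y$, which presents $w$ starting at coordinate $1$) yields a two-way $(\phi/\psi,0)$-bridge between them, i.e., after translating back, a two-way $(\phi/\psi,j_k)$-bridge between $x$ and $x'$; and a $(\phi/\psi,j_k)$-bridge is automatically a $(\phi/\psi,m)$-bridge for every $m\le j_k$, since shrinking the left agreement window and keeping the same $n>j_k\ge m$ preserves the defining conditions. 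Because the set of such $k$ is infinite and $j_k\to\infty$, for every $m\in\mathbb Z$ we can pick such a $k$ with $j_k\ge m$, producing a $(\phi/\psi,m)$-bridge in each direction; hence $x\to_{\phi/\psi}x'$ and $x'\to_{\phi/\psi}x$.

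With the claim established, suppose for contradiction that $d_{\phi/\psi}(y)\ge d'+1$, and pick $x_0,\dots,x_{d'}\in\phi^{-1}(y)$ lying in pairwise distinct $\phi/\psi$-classes. By the claim, for each pair $i\ne i'$ the set $\{k:S_{x_i}(k)\cap S_{x_{i'}}(k)\ne\emptyset\}$ is finite; since there are only finitely many pairs, choose $k$ large enough that $S_{x_i}(k)\cap S_{x_{i'}}(k)=\emptyset$ for all $i\ne i'$. Then $S_{x_0}(k),\dots,S_{x_{d'}}(k)$ are $d'+1$ pairwise disjoint nonempty subsets of the $d'$-element set $M$, which is impossible. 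Therefore $d_{\phi/\psi}(y)\le d'$, and combined with the first paragraph this gives $d_{\phi/\psi}(y)=d'$.

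The main obstacle is the middle step---turning purely local routing data into a genuine $\phi/\psi$-transition. This requires correctly transporting \hyperref[lmm:transition_blk_makes_a_transition]{Lemma \ref{lmm:transition_blk_makes_a_transition}} along the shift to each occurrence of $w$, the small but load-bearing observation that a $(\phi/\psi,j)$-bridge is also a $(\phi/\psi,m)$-bridge for all $m\le j$, and the use of recurrence precisely to guarantee that occurrences of $w$ lie to the right of every integer, so that a bridge exists at every level. Once this ``common-symbol at arbitrarily late occurrences'' criterion for $\phi/\psi$-equivalence is in place, the final count is a one-line pigeonhole argument.
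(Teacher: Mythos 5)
Your proof is correct and follows essentially the same route as the paper: the lower bound comes from Lemma \ref{lmm:degree_and_depth}, and the upper bound from routing the preimages through the recurrent occurrences of a minimal $\phi/\psi$-presented block, invoking Lemma \ref{lmm:transition_blk_makes_a_transition} at each occurrence and finishing with a pigeonhole count over the symbol set $M$. The only differences are cosmetic (you phrase the pigeonhole as a disjointness contradiction at one late occurrence rather than extracting a pair shared infinitely often, and you work with $\phi^{-1}(y)$ rather than $\phi^{-1}([y]_\psi)$, which suffices), and your explicit justification that a $(\phi/\psi,j_k)$-bridge is an $(\phi/\psi,m)$-bridge for all $m\le j_k$ fills in a step the paper leaves implicit.
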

  \begin{proof}
    Put $d=d_{\phi/\psi}(y)$. \hyperref[lmm:degree_and_depth]{Lemma \ref{lmm:degree_and_depth}} induces the inequality $d'\le d$. Let $w$ be a block that occurs in $y$ and is $\phi/\psi$-presented through $M$ at $n$ with $M=\{a_1,\cdots,a_{d'}\}$. Trivially $d_{\phi/\psi}(w)=d'$. Let $y|_{(n_i,n_i+|w|]}=w$ for increasing numbers $n_i\in\mathbb{N}$ with $n_{i+1}>n_i+|w|$. Choose any $d'+1$ points $x_0,\cdots,x_{d'}$ in $\phi^{-1}([y]_\psi)$. We claim that there are $0\le j<k\le d'$ with $x_j\sim_{\phi/\psi} x_k$. The claim being true shows that $d\le d'$.
  
    For every $i\in\mathbb{N}$ each of the blocks $x_0|_{I_i},\cdots,x_{d'}|_{I_i}$ is $\pi$-routable through one of some symbols $m_i(0),\cdots,m_i(d')$ at $n$, respectively, where $I_i=(n_i,n_i+|w|]$ and $m_i$ maps $\{0,\cdots,d'\}$ into $M$. For every $i\in\mathbb{N}$ there are $0\le j_i<k_i\le d'$ with $m_i(j_i)=m_i(k_i)$ since $|M|=d'$. By the pigeonhole principle there must be $0\le j<k\le d'$ such that $(j,k)=(j_i,k_i)$ for infinitely many $i$'s. By \hyperref[lmm:transition_blk_makes_a_transition]{Lemma \ref{lmm:transition_blk_makes_a_transition}} for each $i$ with $(j_i,k_i)=(j,k)$ there exists a two-way $(\phi/\psi,n_i)$-bridge between $x_j$ and $x_k$. Hence $x_j\sim_{\phi/\psi}x_k$.
  \end{proof}

  \begin{corollary}\label{cor:degree_right_transitive_point}
    Let $X$ and $Y$ be 1-step irreducible shifts of finite type and $Z$ a sofic shift with 1-block factor codes $\phi:X\to Y,\psi:Y\to Z$ and $\pi=\psi\circ\phi$. Let $y$ be a right transitive point of $Y$. Then $d_{\phi/\psi}=d_{\phi/\psi}(y)$.
  \end{corollary}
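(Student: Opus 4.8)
The plan is to read off the corollary from Proposition~\ref{prop:reldegree_of_a_point} together with Lemma~\ref{lmm:degree_and_depth}, so the argument is essentially bookkeeping. First I would record the two features of a right transitive point $y$ that matter here: such a point is recurrent, so Proposition~\ref{prop:reldegree_of_a_point} applies to it; and every block of $\mathcal{B}(Y)$ occurs in $y$. Write $\delta=\min\{d_{\phi/\psi}(w)\mid w\in\mathcal{B}(Y)\}$ for the minimal relative depth taken over \emph{all} blocks of $Y$. Because every block of $\mathcal{B}(Y)$ occurs in $y$, the quantity $d'$ attached to $y$ in Proposition~\ref{prop:reldegree_of_a_point} coincides with $\delta$, and hence $d_{\phi/\psi}(y)=\delta$.

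It then remains to identify $\delta$ with the relative class degree $d_{\phi/\psi}=\min_{y'\in Y}d_{\phi/\psi}(y')$. One inequality is immediate from the definition of $d_{\phi/\psi}$ as a minimum: $d_{\phi/\psi}\le d_{\phi/\psi}(y)=\delta$. For the reverse inequality I would invoke Lemma~\ref{lmm:degree_and_depth}: given any $y'\in Y$, it produces a block $v$ occurring in $y'$ with $d_{\phi/\psi}(v)\le d_{\phi/\psi}(y')$; since $v\in\mathcal{B}(Y)$ this gives $\delta\le d_{\phi/\psi}(v)\le d_{\phi/\psi}(y')$. Taking the infimum over $y'\in Y$ yields $\delta\le d_{\phi/\psi}$. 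Combining the two inequalities gives $d_{\phi/\psi}=\delta=d_{\phi/\psi}(y)$, which is the claim.

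I do not expect a real obstacle in this proof; the only points needing a little care are (i) noting that a right transitive point is recurrent, so that Proposition~\ref{prop:reldegree_of_a_point} is legitimately applicable, and (ii) using right transitivity to upgrade the minimum ``over blocks occurring in $y$'' to the minimum ``over all blocks of $\mathcal{B}(Y)$'', which is exactly what lets one match the local quantity $d_{\phi/\psi}(y)$ against the global relative class degree. (One may also remark that, $Y$ being an irreducible sofic shift, right transitive points of $Y$ do exist, so the statement is not vacuous.)
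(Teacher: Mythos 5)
Your proposal is correct and follows essentially the same route as the paper: both use right transitivity to see that a block of globally minimal $\phi/\psi$-depth occurs in $y$ (so that, via the recurrence of $y$ and Proposition~\ref{prop:reldegree_of_a_point}, $d_{\phi/\psi}(y)$ equals that minimal depth), and both use Lemma~\ref{lmm:degree_and_depth} to bound $d_{\phi/\psi}(y')$ from below for an arbitrary point $y'$. You merely make explicit the appeal to Proposition~\ref{prop:reldegree_of_a_point} that the paper leaves implicit.
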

  \begin{proof}
    Any $\phi/\psi$-presented block $w$ with the smallest $\phi/\psi$-depth among all blocks in $\mathcal{B}(Y)$ occurs in $y$ as it is right transitive. So $d_{\phi/\psi}(y)=d_{\phi/\psi}(w)$. Also any point $y'$ in $Y$ has by \hyperref[lmm:degree_and_depth]{Lemma \ref{lmm:degree_and_depth}} a $\phi/\psi$-presented block $v$ the $\phi/\psi$-depth of which is no greater than $d_{\phi/\psi}(y')$. Then $d_{\phi/\psi}(y')\ge d_{\phi/\psi}(v)\ge d_{\phi/\psi}(w)=d_{\phi/\psi}(y)$. Hence $d_{\phi/\psi}=d_{\phi/\psi}(y)$.
  \end{proof}

  Combining \hyperref[thm:class_transitive_point]{Theorem \ref{thm:class_transitive_point}}, \hyperref[thm:classes_and_relative_classes]{Theorem \ref{thm:classes_and_relative_classes}} and \hyperref[cor:degree_right_transitive_point]{Corollary \ref{cor:degree_right_transitive_point}} \hyperref[prop:relative_class_right_transitive_point]{Proposition \ref{prop:relative_class_right_transitive_point}} is obtained.

  \begin{proposition}\label{prop:relative_class_right_transitive_point}
    Let $X$ and $Y$ be 1-step irreducible shifts of finite type and $Z$ a sofic shift with 1-block factor codes $\phi:X\to Y,\psi:Y\to Z$ and $\pi=\psi\circ\phi$. Let $y$ in $Y$ be right transitive (resp., bi-transitive). Then the following hold:
    \begin{enumerate}
      \item There are exactly $d_{\phi/\psi}$ $\phi/\psi$-classes over $y$.
      \item Any $\phi/\psi$-class over $y$ contains a right transitive (resp., bi-transitive) point.
      \item Two points from distinct $\phi/\psi$-classes over $y$  are mutually separated.
    \end{enumerate}
  \end{proposition}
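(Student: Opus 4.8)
The plan is to derive all three statements by transporting the known structure of $\pi$-classes (Theorem~\ref{thm:class_transitive_point}) across the identification $[x]_{\phi/\psi}=[x]_\pi$ provided by Theorem~\ref{thm:classes_and_relative_classes}, and to use Corollary~\ref{cor:degree_right_transitive_point} for the counting part. Throughout set $z=\psi(y)$. Since $\psi$ is a factor code, it carries right transitive (resp. bi-transitive) points of $Y$ to right transitive (resp. bi-transitive) points of $Z$, so $z$ is right transitive (resp. bi-transitive) in $Z$ whenever $y$ has the corresponding property in $Y$. This is the one external fact needed to make the results for $\pi$ over $z$ available.

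For (1): by definition the number of $\phi/\psi$-classes over $y$ is $d_{\phi/\psi}(y)=|\llbracket y\rrbracket_{\phi/\psi}|$, and Corollary~\ref{cor:degree_right_transitive_point} gives $d_{\phi/\psi}(y)=d_{\phi/\psi}$ because $y$ is right transitive (a bi-transitive point being in particular right transitive). Hence there are exactly $d_{\phi/\psi}$ classes over $y$.

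For (2): let $C\in\llbracket y\rrbracket_{\phi/\psi}$, written $C=[x]_{\phi/\psi}$ with $\phi(x)=y$. By Theorem~\ref{thm:classes_and_relative_classes} we have $C=[x]_\pi$, and $\pi(x)=\psi(\phi(x))=z$, so $C$ is a $\pi$-class over $z$. Since $z$ is right transitive (resp. bi-transitive), Theorem~\ref{thm:class_transitive_point}(2) applied to $\pi$ shows $C$ contains a right transitive (resp. bi-transitive) point of $X$. For (3): take distinct $C_1,C_2\in\llbracket y\rrbracket_{\phi/\psi}$ with representatives $x_1,x_2\in\phi^{-1}(y)$, so $C_i=[x_i]_\pi$ by Theorem~\ref{thm:classes_and_relative_classes}; these are distinct $\pi$-classes over $z$, since $[x_1]_\pi=[x_2]_\pi$ would force $[x_1]_{\phi/\psi}=[x_2]_{\phi/\psi}$ by the same theorem. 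Then any $x\in C_1$ and $x'\in C_2$ lie in distinct $\pi$-classes over the right transitive point $z$ and are therefore mutually separated by Theorem~\ref{thm:class_transitive_point}(3).

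The argument is essentially bookkeeping, and I do not expect a genuine obstacle. The two points that require a moment of care are the passage of (bi-)transitivity from $y$ to $z$ under $\psi$, and the two-directional use of Theorem~\ref{thm:classes_and_relative_classes}: first to rewrite a $\phi/\psi$-class over $y$ as a $\pi$-class over $z$ so that Theorem~\ref{thm:class_transitive_point} applies, and then to check that distinct $\phi/\psi$-classes over $y$ remain distinct when regarded as $\pi$-classes over $z$.
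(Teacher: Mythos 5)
Your proof is correct and follows exactly the combination the paper intends: Corollary \ref{cor:degree_right_transitive_point} for the count, Theorem \ref{thm:classes_and_relative_classes} to identify $\phi/\psi$-classes over $y$ with $\pi$-classes over $z=\psi(y)$, and Theorem \ref{thm:class_transitive_point} applied to $\pi$ (noting that $\psi$ preserves right/bi-transitivity). The paper gives no more detail than this, so your write-up is essentially the paper's argument made explicit.
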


  \hyperref[thm:division_multiplicity]{Theorem \ref{thm:division_multiplicity}} is the most important result of this paper. It describes how relative transition classes are organized over transition classes. \hyperref[thm:block_partition]{Theorem \ref{thm:block_partition}} is a key ingredient to its proof. Given a block $w$ of $Z$ let $d^*(w,i)=|\{u|_i\mid\pi(u)=w\}|$. Then $w$ is called a {\em magic block} with a {\em magic coordinate} $i$ if $d^*(w,i)$ is the smallest among all blocks of $Z$ and numbers $i$.

  \begin{theorem}\label{thm:block_partition}\cite{AllHJ13}
      Let $X$ be a 1-step irreducible shift of finite type and $Z$ a sofic shift with a 1-block factor code $\pi:X\to Y$. Let $w$ be a magic block of $\pi$. Then there is a partition $\mathcal{P}$ of $\pi^{-1}(w)$ into $d_\pi$ subsets such that for any bi-transitive point $z$ in $Z$ and $i\in\mathbb{Z}$ with $z|_{(i,i+|w|]} = w$ we have a bijection $\rho_{z,i}:\llbracket z\rrbracket_\pi\to\mathcal{P}$ with $D|_{(i,i+|w|]} \subset\rho_{z,i}(D),D\in\llbracket z\rrbracket_\pi$.
  \end{theorem}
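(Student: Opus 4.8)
The plan is to read the partition off one fixed transitive point and then to prove that it is forced. Since $Z$ is an irreducible sofic shift and $w$ is a block of $Z$, fix a bi-transitive point $z_0$ of $Z$ and, after a translation, assume $z_0|_{(0,|w|]}=w$. By \hyperref[thm:class_transitive_point]{Theorem \ref{thm:class_transitive_point}}(1) there are exactly $d_\pi$ transition classes $D_1,\dots,D_{d_\pi}$ over $z_0$; put $Q_l=D_l|_{(0,|w|]}\subset\pi^{-1}(w)$. Each $Q_l$ is nonempty, and they are pairwise disjoint: if $u\in Q_l\cap Q_{l'}$ with $l\ne l'$, choosing $x\in D_l$ and $x'\in D_{l'}$ with $x|_{(0,|w|]}=u=x'|_{(0,|w|]}$ yields $x|_1=x'|_1$, contradicting \hyperref[thm:class_transitive_point]{Theorem \ref{thm:class_transitive_point}}(3). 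Thus $\pi^{-1}(w)$ splits into the $d_\pi$ disjoint sets $Q_1,\dots,Q_{d_\pi}$ and a (possibly empty) remainder $R$; I would take $\mathcal P=\{Q_1\cup R,Q_2,\dots,Q_{d_\pi}\}$, a partition of $\pi^{-1}(w)$ into $d_\pi$ subsets.

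Everything then reduces to a \emph{homogeneity} statement: for every bi-transitive $z\in Z$ and every $i\in\mathbb Z$ with $z|_{(i,i+|w|]}=w$ one has $\{D|_{(i,i+|w|]}\mid D\in\llbracket z\rrbracket_\pi\}=\{Q_1,\dots,Q_{d_\pi}\}$. Granting this, define $\rho_{z,i}(D)$ to be the member of $\mathcal P$ containing $D|_{(i,i+|w|]}$. Then $D|_{(i,i+|w|]}\subset\rho_{z,i}(D)$ holds by construction; $\rho_{z,i}$ is injective because the sets $D|_{(i,i+|w|]}$ are nonempty and pairwise disjoint (by \hyperref[thm:class_transitive_point]{Theorem \ref{thm:class_transitive_point}}(1),(3)) and therefore lie in distinct members of $\mathcal P$; and it is surjective since $|\llbracket z\rrbracket_\pi|=d_\pi=|\mathcal P|$. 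The remainder $R$ plays no role here, as it never equals $D|_{(i,i+|w|]}$ for a transitive $z$.

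To prove the homogeneity statement I would exploit the magic property to show that the collection of pieces into which the transition classes cut $\pi^{-1}(w)$ is an invariant of $w$. The point of minimality of $d^*(w,i)$ is that for any block $vwv'$ in $\mathcal B(Z)$ one has $d^*(vwv',|v|+i)\le d^*(w,i)$, because preimages of $vwv'$ restrict to preimages of $w$; minimality forces equality, so lengthening $w$ inside $Z$ never destroys a symbol at the magic coordinate and, combined with the two-way-bridge construction of \hyperref[lmm:transition_blk_makes_a_transition]{Lemma \ref{lmm:transition_blk_makes_a_transition}} (taken with $\psi$ the identity), never alters which preimages of $w$ can be bridged to which. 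A compactness argument along a bi-transitive point (only finitely many candidate subcollections occur) then upgrades this to: at every occurrence of $w$ in every bi-transitive point, the transition classes induce the same partition of $\pi^{-1}(w)$. Passing between classes over $z$ and their bi-transitive representatives uses \hyperref[thm:class_transitive_point]{Theorem \ref{thm:class_transitive_point}}, and transplanting from one bi-transitive point to another is done by gluing a long central block of $z$ around an occurrence of $w$ into the other point, which is legitimate because $X$ is a shift of finite type, and then carrying transitions and representatives across the shared block.

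The hard part is precisely this homogeneity: a priori the class partition of $\pi^{-1}(w)$ could depend on the transitive point used, and even on which occurrence of $w$ inside it is chosen, and excluding that is where the minimality built into the notion of a magic block must be married to a gluing argument for transitions. Once the class partition of $\pi^{-1}(w)$ is recognised as depending only on $w$, the rest --- disjointness, bijectivity of $\rho_{z,i}$, and the harmless placement of the non-accessed preimages --- follows from \hyperref[thm:class_transitive_point]{Theorem \ref{thm:class_transitive_point}}.
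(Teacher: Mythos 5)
You are proposing a proof for a statement that this paper does not actually prove: Theorem \ref{thm:block_partition} is quoted from \cite{AllHJ13}, so there is no internal proof to compare against, and your argument has to stand on its own. It does not, because the entire weight rests on your ``homogeneity'' claim, which is both stronger than the theorem and essentially asserted rather than proved. The theorem only claims $D|_{(i,i+|w|]}\subset\rho_{z,i}(D)$, not equality, and for good reason: for a given bi-transitive $z$ and occurrence $i$, a block $u\in\pi^{-1}(w)$ need not occur as $x|_{(i,i+|w|]}$ for any $x\in\pi^{-1}(z)$ (its extensions must be compatible with the specific infinite tails $z|_{(-\infty,i]}$ and $z|_{(i+|w|,\infty)}$), so the sets $D|_{(i,i+|w|]}$ and the ``remainder'' genuinely depend on $(z,i)$. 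Hence your claim that $\{D|_{(i,i+|w|]}\mid D\in\llbracket z\rrbracket_\pi\}$ always equals the fixed family $\{Q_1,\dots,Q_{d_\pi}\}$ read off one point $z_0$ is not something you can expect to hold, and your construction inherits the damage: lumping the leftover set $R$ into $Q_1$ is arbitrary, and nothing you say excludes that over some other bi-transitive point one class restricts into $Q_1$ and a different class restricts into $R$, which would put two distinct classes inside the single member $Q_1\cup R$ and destroy both well-definedness of the containment condition and injectivity of $\rho_{z,i}$.

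The step you label as ``the hard part'' is exactly the content of the theorem, and your sketch of it does not close the gap. The observation that minimality of $d^*(w,i)$ forces $d^*(vwv',|v|+i)=d^*(w,i)$ for extensions $vwv'$ is correct, but the jump from ``the symbol count at the magic coordinate is stable under extension'' to ``which preimages of $w$ can be bridged to which is the same at every occurrence in every bi-transitive point'' is precisely what must be established; invoking Lemma \ref{lmm:transition_blk_makes_a_transition} plus ``a compactness argument'' names the tools without performing the argument. What is actually needed (and what the cited proof supplies) is an intrinsic description of the partition of $\pi^{-1}(w)$ --- e.g.\ via routability through symbols at a magic coordinate --- together with two directed statements: preimages routable through a common symbol at the magic coordinate lie in a common class over any transitive point containing $w$ (this is the bridge lemma), and, conversely, preimages lying in a common class are forced into a common cell of that intrinsic partition, where minimality of the magic block is used in an essential way. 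Without carrying out this second direction, and without defining $\mathcal P$ independently of a chosen point $z_0$, the proposal does not prove the theorem.
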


  \begin{theorem}\label{thm:division_multiplicity}
    Let $X$ and $Y$ be irreducible shifts of finite type, $Z$ a sofic shift and $\phi:X\to Y$, $\psi:Y\to Z$, $\pi:X\to Z$ 1-block factor codes with $\pi=\psi\circ\phi$. Then $d_{\phi/\psi}$ divides $d_\phi$ and $d_\pi=d_\psi d_{\phi/\psi}\le d_\psi d_\phi$.
  \end{theorem}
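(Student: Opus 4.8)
The plan is to reduce everything to a count of transition classes over bi‑transitive points and to read off a product structure from that count. Fix a bi‑transitive point $y$ of $Y$ and put $z=\psi(y)$, a bi‑transitive point of $Z$; then $d_\pi=d_\pi(z)$ and $d_\psi=d_\psi(z)$ by \hyperref[thm:class_transitive_point]{Theorem \ref{thm:class_transitive_point}}. By \hyperref[thm:classes_and_relative_classes]{Theorem \ref{thm:classes_and_relative_classes}} the $\pi$-classes over $z$ are, as subsets of $X$, precisely the $\phi/\psi$-classes contained in $\pi^{-1}(z)$, and $\phi$ sends each such class into a single $\psi$-class over $z$; this yields a well‑defined map $\Phi\colon\llbracket z\rrbracket_\pi\to\llbracket z\rrbracket_\psi$, which is onto since $\phi$ is onto. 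As there are $d_\psi$ $\psi$-classes over $z$, the equality $d_\pi=d_\psi\,d_{\phi/\psi}$ — and hence, by $d_{\phi/\psi}\le d_\phi$, also $d_\pi\le d_\psi d_\phi$ — will follow once I show that every fibre of $\Phi$ has exactly $d_{\phi/\psi}$ elements.

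For a $\psi$-class $C$ over $z$ choose a bi‑transitive $y'\in C$ (\hyperref[thm:class_transitive_point]{Theorem \ref{thm:class_transitive_point}}(2) for $\psi$). Each $\phi/\psi$-class over $y'$ is a $\pi$-class over $z$ whose $\phi$-image is $\sim_\psi y'$, hence contained in $\phi^{-1}(C)$, so it lies in $\Phi^{-1}(C)$; by \hyperref[prop:relative_class_right_transitive_point]{Proposition \ref{prop:relative_class_right_transitive_point}}(1) there are $d_{\phi/\psi}$ of them, and over the $d_\psi$ choices of $C$ these are pairwise distinct, so already $d_\pi\ge d_\psi d_{\phi/\psi}$. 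The reverse inequality is the crux, being equivalent to the assertion that each $K\in\Phi^{-1}(C)$ already meets $\phi^{-1}(y')$. I would establish it at the level of depths: by \hyperref[cor:degree_right_transitive_point]{Corollary \ref{cor:degree_right_transitive_point}} and its analogue for $\psi$, pick a minimal $\psi$-presented block $v\in\mathcal B(Z)$, routing through $\{b_1,\dots,b_{d_\psi}\}$ at $n_Y$, and for each $j$ a right transitive point of $Y$ through $b_j$, which is recurrent and hence carries a minimal $\phi/\psi$-presented block past the occurrence of $b_j$ (\hyperref[prop:reldegree_of_a_point]{Proposition \ref{prop:reldegree_of_a_point}}); the block of $Y$ read from $b_j$ through that minimal block is then itself $\phi/\psi$-presented through a set of $d_{\phi/\psi}$ symbols, by a localisation of routability. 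Splicing copies of $v$ and of these blocks into one block $W\in\mathcal B(Z)$, every $u\in\pi^{-1}(W)$ should be routable in $\mathcal A_X$ through at most $d_\psi d_{\phi/\psi}$ symbols — route $\phi(u)$ downstairs onto some $b_j$, then route a lift of the ensuing window relatively, and reassemble a single $\pi$-preimage of $W$ — giving $d_\pi\le d_\pi(W)\le d_\psi d_{\phi/\psi}$. The delicate point, and the main obstacle, is that the block downstream of $b_j$ after the downstairs routing depends on $u$, not on $W$ alone, so the two routings do not compose naively; this is precisely where I expect \hyperref[thm:block_partition]{Theorem \ref{thm:block_partition}}, applied to a block magic for both $\pi$ and $\psi$, to carry the argument, producing the product decomposition $\llbracket z\rrbracket_\pi\cong\llbracket z\rrbracket_\psi\times(\text{$d_{\phi/\psi}$ relative classes})$ outright.

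Finally, for $d_{\phi/\psi}\mid d_\phi$, work over a bi‑transitive $y\in Y$. Each of the $d_\phi$ $\phi$-classes over $y$ sits inside a unique $\phi/\psi$-class over $y$ (as $[x]_\phi\subset[x]_{\phi/\psi}$), so the $\phi$-classes over $y$ are partitioned into the $d_{\phi/\psi}$ groups indexed by the $\phi/\psi$-classes over $y$, and it suffices that these groups share a common size. Fix a magic block $w$ of $\phi$, chosen together with a magic block of $\pi$ so that $\phi^{-1}(w)\subset\pi^{-1}(\psi(w))$ and both conclusions of \hyperref[thm:block_partition]{Theorem \ref{thm:block_partition}} are available: a fixed partition $\mathcal P$ of $\phi^{-1}(w)$ into $d_\phi$ parts and a fixed partition $\mathcal P'$ of $\pi^{-1}(\psi(w))$ into $d_\pi$ parts, with compatible bijections onto them for bi‑transitive points through the respective blocks. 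Transporting the grouping of $\phi$-classes through the bijections $\rho_{y,i}$ gives a partition of the fixed finite set $\mathcal P$; I would then show that $\mathcal P$ refines $\mathcal P'|_{\phi^{-1}(w)}$, that the latter has exactly $d_{\phi/\psi}$ cells (using the product decomposition of $\mathcal P'$ from the first part together with the fact that a $\psi$-magic block is met by a single $\psi$-class over a bi‑transitive point), and — the technical core — that this refinement is regular, i.e.\ each cell of $\mathcal P'|_{\phi^{-1}(w)}$ is split into the same number of cells of $\mathcal P$; here two cells of $\mathcal P$ falling into the same group should be detected, via \hyperref[lmm:transition_blk_makes_a_transition]{Lemma \ref{lmm:transition_blk_makes_a_transition}}, by the existence of a two‑way relative bridge recognisable from $\mathcal P$ alone, independently of $y$. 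Regularity then gives $d_\phi=(d_\phi/d_{\phi/\psi})\,d_{\phi/\psi}$, and establishing it — the $y$-independence and uniformity of the grouping of $\mathcal P$ — is, as for the equality $d_\pi=d_\psi d_{\phi/\psi}$, where the real work lies.
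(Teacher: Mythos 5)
Your first paragraph and the lower bound are fine: the map $\Phi\colon\llbracket z\rrbracket_\pi\to\llbracket z\rrbracket_\psi$ is well defined by Theorem \ref{thm:classes_and_relative_classes}, and Proposition \ref{prop:relative_class_right_transitive_point}(1) does give at least $d_{\phi/\psi}$ classes in each fibre, hence $d_\pi\ge d_\psi d_{\phi/\psi}$. But the two statements that carry the actual content of the theorem are left unproven. For the upper bound you reduce correctly to the assertion that every $\pi$-class in $\Phi^{-1}(C)$ meets $\phi^{-1}(y')$ (equivalently, that the relative classes over one bi-transitive point of $C$ are the same subsets of $X$ as those over any other), and then you only sketch a splicing-of-routing-blocks construction, yourself identifying the fatal issue --- the block downstream of the $\psi$-routing coordinate depends on the particular preimage $u$, so the $\psi$-routing and the relative routing do not compose --- and conclude with ``this is precisely where I expect Theorem \ref{thm:block_partition} to carry the argument.'' That expectation is not an argument; nothing in your sketch explains how the partition of Theorem \ref{thm:block_partition} repairs the non-composability, and this is exactly the hard point. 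The same happens with the divisibility $d_{\phi/\psi}\mid d_\phi$: you reduce it to a ``regularity'' claim (each cell of the coarser partition splits into equally many cells of $\mathcal P$), declare it the technical core, and do not prove it; the suggested detection of same-group cells ``from $\mathcal P$ alone, independently of $y$'' via Lemma \ref{lmm:transition_blk_makes_a_transition} is not worked out and is precisely the uniformity one must establish.

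For comparison, the paper closes both gaps at once by a single transport argument rather than by composing routings or comparing partitions. It takes a $\pi$-class over $z$ containing the maximal number $q$ of $\phi$-classes, connects bi-transitive representatives $x_1,\dots,x_q$ of those $\phi$-classes by $\pi$-bridges into one block $u$ of $X$, extends $w=\phi(u)$ to a magic block $\hat w$ of $\phi$, and then, given any $\phi$-class over any bi-transitive $y_{i'}$ in any $\psi$-class over $z$, uses the magic coordinates to implant copies $\hat u_1,\dots,\hat u_q$ of the corresponding words into $\phi$-preimages $\bar x_1,\dots,\bar x_q$ of $y_{i'}$. Theorem \ref{thm:block_partition} is used there only to certify that the $\bar x_j$ lie in $q$ distinct $\phi$-classes, while the block $u$ itself supplies the $\pi$-bridges showing these $q$ classes lie in one $\pi$-class; maximality of $q$ then forces every $\pi$-class over $z$ to consist of exactly $q$ $\phi$-classes. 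Counting gives $q=d_\phi/d_{\phi/\psi}$ (so $d_{\phi/\psi}\mid d_\phi$) and $d_\pi=d_\psi d_{\phi/\psi}$ simultaneously --- i.e.\ the uniformity you flagged as ``where the real work lies'' is exactly what this maximality-plus-magic-block construction delivers, and it is what your proposal is missing.
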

  \begin{proof}  
    Degrees and relative degrees are attained over bi-transitive points. Also all transition classes and relative transition classes over bi-transitive points contain bi-transitive points. Hence we may concern only bi-transitive points from now on.
  
    Fix a bi-transitive point $y=y_1$ in $Y$. Then $z=\psi(y)$, too, is bi-transitive. Over $z$ $\psi$-classes $D_1=[y]_\psi,\cdots,D_{d_\psi}$ exist in $Y$. Pick up bi-transitive points $y_2,\cdots,y_{d_\psi}$ from each $D_i,2\le i\le d_\psi$. Over each $y_i$ $\phi$-classes $C_{i,1},\cdots,C_{i,d_\phi}$ exist in $X$ for $1\le i\le d_\psi$. Pick up bi-transitive points $x_{i,j}$ from each $C_{i,j}$ for $1\le i\le d_\psi$ and $1\le j\le d_\phi$.
  
    Note that $\bigcup_{i,j}C_{i,j}=\pi^{-1}(z)$. According to \hyperref[thm:classes_and_relative_classes]{Theorem \ref{thm:classes_and_relative_classes}} each of the $\pi$-classes $E_1,\cdots,E_d$ over $z$ is a disjoint union of some $C_{i,j}$'s. We claim that each $E_k,1\le k\le d_\pi$, consists of the same number of $C_{i,j}$'s.
  
    Assume that $q$ is the greatest number of $\phi$-classes $C_{i,j}$ contained in a single $\pi$-class. We may let $E_1$ be such a $\pi$-class. Again by \hyperref[thm:classes_and_relative_classes]{Theorem \ref{thm:classes_and_relative_classes}} we may assume that $E_1=\bigcup_{j=1}^qC_{1,j}$. For the convenience put $C_j=C_{1,j}$ and $x_j=x_{1,j}$ for $1\le j\le d_\phi$. There are $(\pi,m_j)$-bridges $\vec{x}_j$ from $x_j$ to $x_{j+1}$ for $1\le j\le q-1$ and some $m_j$'s, $n_j$'s with $1<m_1<n_1<m_2<\cdots<m_{q-1}<n_{q-1}$.
  
    Consider blocks $u$ and $u_j=x_j|_{[1,l]},j=1,\cdots,q,$ of length $l>n_{q-1}$ given by
    \[
      u|_t=\begin{cases}
        x_1|_t,&t\in[1,m_1]\\
        \vec x_1|_t,&t\in(m_1,n_1)\\
        x_2|_t,&t\in[n_1,m_2]\\
        \vec x_2|_t,&t\in(m_2,n_2)\\
        \cdots\\
        \vec x_{q-1}|_t,&t\in(m_{q-1},n_{q-1})\\
        x_q|_t,&t\in[n_{q-1},l]
        \end{cases}.
    \]
    The block $u$ starts from $u_1$ and passes across all the $u_j$'s and ends in $u_q$. Later $u$ will take a role of $\pi$-bridges connecting distinct $\phi$-classes. Let $w=\phi(u)$.
  
    Both to the right and to the left of $y|_{[1,l]}=w$ in $y$ which is bi-transitive, there occur magic blocks of $\phi$ infinitely often. Shifting to the right we may assume that for some integers $1\le l'<L',L\ge L'+l$ each of $y|_{[1,l']},y|_{(l',L']}$ and $y|_{(L'+l,L]}$ is a magic block of $\phi$ and $y|_{(L',L'+l]}=w$. The extension $\bar{w}=y|_{[1,L]}$ of $w$ is another magic block of $\phi$ with magic  coordinates $s\le l'$ and $s'>L'+l$ so that $w$ is a subblock of $\hat{w}=\bar{w}|_{[s,s']}$. Note that $\hat{w}$ contains a magic block $\bar{w}|_{(l',L']}$ of $\phi$ and $w$ as its subword. So $\hat{w}$ itself is a magic block of $\phi$ which is an extension of $w$.
  
    Let $\bar{u}_j=x_j|_{[1,L]},\hat{u}_j=\bar{u}_j|_{[s,s']}$ for $1\le j\le q$, and $\hat{l}=|\hat{w}|=s'-s+1$. Fix a bi-transitive point $\bar{x}_1$ from some $\phi$-class $C_{i',\cdot}$ over $y_{i'},1\le{i'}\le d_\psi$, that is not contained in $E_1$ and some $t\in\mathbb Z$ with $\bar{x}_1|_{[t+1,t+L]}=\bar{u}_1$. Since $\phi(\bar{x}_1|_{[t+1,t+L]})=\bar{w}$ has magic coordinates $s$ and $s'$ we have that $\phi^{-1}(y_{i'})|_{t+s}=\phi^{-1}(\bar{w})|_s$ and $\phi^{-1}(y_{i'})|_{t+s'}=\phi^{-1}(\bar{w})_{s'}$. Given $2\le j\le q$ take two $\phi$-preimages $x$ and $x'$ of $y_{i'}$ with $x|_{t+s}=\bar{u}_j|_s=\hat{u}_j|_1$ and $x'|_{t+s'}=\bar{u}_j|_{s'}=\hat{u}_j|_{\hat{l}}$. Then $x|_{(-\infty,t+s)}\hat{u}_jx'|_{(t+s',\infty)}$ is a $\phi$-preimage of $y_{i'}$ in $X$ which is 1-step. As a result $\phi^{-1}(y_{i'})$ contains some $\bar{x}_j$ with $\bar{x}_j|_{[t+s,t+s']}=\hat{u}_j$.
    
    Now find a partition $\mathcal{P}$ of $\phi^{-1}(\hat{w})$, given by \hyperref[thm:block_partition]{Theorem \ref{thm:block_partition}}, such that whenever $y_i|_{[t+1,t+\hat{l}]}=\hat{w}$ for some $1\le i\le d_\psi$ and $t\in\mathbb{Z}$ we have a bijection $\rho_{i,t}:\llbracket y_i\rrbracket_\phi\to\mathcal{P}$ with $C_{i,j}|_{[t+1,t+\hat{l}]}\subset\rho_{i,t}(C_{i,j}),1\le j\le d_\phi$. With $i$ and $t$ fixed all $\hat{u}_j,1\le j\le q$, belong to distinct elements $\rho_{i,t}(C_{i,j})$ of $\mathcal{P}$ as they lie in $C_j|_{[s,s']}$, respectively.  
    
    Since $\mathcal{P}$ is a partition, for any $1\le i\le d_\psi,1\le j\le d_\phi$ and $t\in\mathbb{Z}$ there is at most one $1\le k\le q$ with $\hat{u}_k$ in $C_{i,j}|_{[t+1,t+\hat{l}]}$: If some $x,x'$ in $C_{i,j}$ satisfy that $\phi(x|_{[t+1,t+\hat{l}]})=\phi(x'|_{[t+1,t+\hat{l}]})=\hat{w}$ and $x|_{[t+1,t+\hat{l}]}=\hat{u}_k$, then $x'|_{[t+1,t+\hat{l}]}\ne\hat{u}_{k'}$ for all $k'\ne k$ since $\hat{u}_k$ and $\hat{u}_{k'}$ belong to distinct elements of $\mathcal{P}$ while $x|_{[t+1,t+\hat{l}]}$ and $x'|_{[t+1,t+\hat{l}]}$ to the unique element $\rho_{i,t}(C_{i,j})$ of $\mathcal{P}$ that includes $C_{i,j}|_{[t+s,t+s']}$. It follows that $\bar{x}_1,\cdots,\bar{x}_q$ lie in all distinct $\phi$-classes over $y_{i'}$.
    
    Using $u$ we easily get $(\pi,t)$-bridges from $\bar{x}_1$ to $\bar{x}_2$ to $\cdots$ to $\bar{x}_q$ as $\hat{u}_j$ contains $u_j$ for any $1\le j\le q$. By \hyperref[thm:class_transitive_point]{Theorem \ref{thm:class_transitive_point}} $[\bar{x}_1]_\phi,\cdots,[\bar{x}_q]_\phi$ are contained in a single $\pi$-class. Combining the maximality of $q$ with this fact $[\bar{x}_1]_\pi$ turns out to be exactly $\bigcup_{j=1}^q[\bar{x}_j]_\phi$.
  
    We have seen that $d_\phi$ $\phi$-classes over each $y_i$ are partitioned into $d_\phi/q$ sets each of which consists of $q$ $\phi$-classes constituting a $\pi$-class taken into union. So over $y_i$ $X$ has $d_\phi/q$ $\pi$-classes. According to \hyperref[thm:classes_and_relative_classes]{Theorem \ref{thm:classes_and_relative_classes}} these happen to be also $\phi/\psi$-classes. Hence $d_\phi/q=d_{\phi/\psi}$. At last, counting all $\pi$-classes over all $y_i$ we get $d_\pi=d_\psi d_{\phi/\psi}$.
  \end{proof}

  If $Z$ is of finite type and $\varphi$ is another factor code from $Z$, then
  \[ \begin{split}
    d_{\varphi\circ\psi\circ\phi}&=d_\varphi d_{\psi\circ\phi/\varphi}\\
    &=d_{\varphi\circ\psi}d_{\phi/\varphi\circ\psi}\\
    &=d_\varphi d_{\psi/\varphi}d_{\phi/\varphi\circ\psi}
  \end{split}\]
  and $d_{\psi\circ\phi/\varphi}=d_{\psi/\varphi}d_{\phi/\varphi\circ\psi}$.
  
  We finish the paper with \hyperref[cor:eq_multiplicity]{Corollary \ref{cor:eq_multiplicity}}. It tells in a broader context why the multiplicative equality for the degrees of finite-to-one factor codes holds, and shows that \hyperref[ex:multiplication_fails]{Example \ref{ex:multiplication_fails}} indeed barely slips out of the conditions for the equality.

  \begin{corollary}\label{cor:eq_multiplicity}
    Let $X$ and $Y$ be 1-step irreducible shifts of finite type and $Z$ a sofic shift with 1-block factor codes $\phi:X\to Y,\psi:Y\to Z$ and $\pi=\psi\circ\phi$. For finite-to-one $\psi$ or $\phi$ with $d_\phi=1$ we have that $[x]_\phi=[x]_{\phi/\psi}=[x]_\pi$ and $d_\pi=d_\psi d_\phi$.
  \end{corollary}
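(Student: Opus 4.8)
The plan is to deduce both assertions from the relation $d_\pi=d_\psi d_{\phi/\psi}$ of Theorem \ref{thm:division_multiplicity} by establishing, under either hypothesis, that $d_{\phi/\psi}=d_\phi$; the statement about transition classes will then follow over bi-transitive points, where by Proposition \ref{prop:relative_class_right_transitive_point} and Theorem \ref{thm:class_transitive_point} the relevant classes are governed by the degrees. Throughout I would use that $[x]_\phi\subseteq[x]_{\phi/\psi}=[x]_\pi$ always holds by Theorem \ref{thm:classes_and_relative_classes}.

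The case $d_\phi=1$ is immediate: Theorem \ref{thm:division_multiplicity} says $d_{\phi/\psi}$ divides $d_\phi$, so $d_{\phi/\psi}=1=d_\phi$ and $d_\pi=d_\psi=d_\psi d_\phi$.

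For finite-to-one $\psi$ I would argue over a fixed bi-transitive $z\in Z$, using two classical facts about a finite-to-one $1$-block factor code $\psi$ between irreducible shifts of finite type: (i) it forbids diamonds, so any two bi-asymptotic points of $Y$ with the same $\psi$-image coincide; and (ii) $z$, being doubly transitive, has exactly $d_\psi$ preimages, whence by Theorem \ref{thm:class_transitive_point} each of the $d_\psi$ $\psi$-classes over $z$ is a singleton, so a bi-transitive $y\in\psi^{-1}(z)$ satisfies $[y]_\psi=\{y\}$. I claim that over $z$ the $\pi$-classes coincide with the $\phi$-classes. Since every $\pi$-class over $z$ contains a bi-transitive point $\tilde x$ by Theorem \ref{thm:class_transitive_point}, it suffices to show $[\tilde x]_\pi=[\tilde x]_\phi$ for such $\tilde x$, as then every member of that $\pi$-class is $\phi$-equivalent to $\tilde x$. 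Take $x'\sim_\pi\tilde x$; by Theorem \ref{thm:classes_and_relative_classes}, $\phi(\tilde x)\sim_\psi\phi(x')$, but $\phi(\tilde x)$ is a bi-transitive element of $\psi^{-1}(z)$, so (ii) forces $\phi(x')=\phi(\tilde x)=:y$. For each $m$ choose a $(\pi,m)$-bridge $\vec x_m$ from $\tilde x$ to $x'$; since $\phi$ is $1$-block, $\phi(\vec x_m)$ agrees with $y$ outside a bounded interval and $\psi(\phi(\vec x_m))=\pi(\vec x_m)=z=\psi(y)$, so (i) gives $\phi(\vec x_m)=y$, i.e. $\vec x_m$ is in fact a $(\phi,m)$-bridge. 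Hence $\tilde x\to_\phi x'$, and symmetrically $x'\to_\phi\tilde x$, so $\tilde x\sim_\phi x'$ and $[\tilde x]_\pi=[\tilde x]_\phi$. Consequently $[x]_\phi=[x]_{\phi/\psi}=[x]_\pi$ for every $x$ lying over a bi-transitive point; restricting to $\phi^{-1}(y)$ for bi-transitive $y\in Y$ this yields $d_{\phi/\psi}=d_{\phi/\psi}(y)=d_\phi(y)=d_\phi$ by Corollary \ref{cor:degree_right_transitive_point} and Theorem \ref{thm:class_transitive_point}, and then $d_\pi=d_\psi d_\phi$.

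I expect the finite-to-one case to be the main obstacle, and within it two points deserve care: verifying that the $\phi$-image of a $(\pi,m)$-bridge is genuinely bi-asymptotic to $y$ before invoking the no-diamond property — this is precisely where $1$-blockness of $\phi$ enters — and justifying fact (ii), for which one combines the classical preimage count for finite-to-one codes on doubly transitive points with the exact count of transition classes in Theorem \ref{thm:class_transitive_point}.
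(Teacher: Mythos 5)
Your argument is correct and follows essentially the same route as the paper: the $d_\phi=1$ case is dispatched through Theorem \ref{thm:division_multiplicity} exactly as in the text, and in the finite-to-one case the engine is the same one the paper uses, namely that a $(\pi,m)$-bridge has $\phi$-image bi-asymptotic to $y$ with the same $\psi$-image, so the no-diamond property of the finite-to-one code $\psi$ forces that image to equal $y$ and turns $\pi$-bridges into $\phi$-bridges. The visible difference is one of scope and bookkeeping: the paper fixes an arbitrary $z$, a $\psi$-class $C$, a point $y\in C$ and two points $x,x'$ in the \emph{same} fibre $\phi^{-1}(y)$, and from $x\sim_\pi x'\Rightarrow x\sim_\phi x'$ asserts $[x]_\pi=[x]_{\phi/\psi}=[x]_\phi$; you instead restrict to a bi-transitive $z$ and insert the extra step that $\psi$-classes over such $z$ are singletons (exactly $d_\psi$ preimages versus exactly $d_\psi$ classes), which is what guarantees that a whole $\pi$-class over $z$ sits inside a single $\phi$-fibre before the bridge argument is applied. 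That step is left implicit in the paper, and your version is the more careful one on this point; the price is that you obtain the class identity only over bi-transitive points, which is nevertheless all that is needed for $d_{\phi/\psi}=d_\phi$ and hence $d_\pi=d_\psi d_\phi$ via Theorem \ref{thm:division_multiplicity} and Corollary \ref{cor:degree_right_transitive_point}. Your identification of the two delicate points (1-blockness of $\phi$ for bi-asymptoticity of the bridge images, and the preimage count behind the singleton classes) matches where the actual content lies.
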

  \begin{proof}
    In case $d_\phi=1$, as relative degrees are no greater than degrees, $d_{\phi/\psi}=d_\phi=1$. By \hyperref[thm:division_multiplicity]{(\ref{thm:division_multiplicity})}, $d_\pi=d_\psi d_{\phi/\psi}=d_\psi d_\phi=d_\psi$.
  
    Assume $\psi$ to be finite-to-one. Choose $z$ in $Z$, $C$ in $\llbracket z\rrbracket_\psi$, $y$ in $C$ and $x,x'$ in $\phi^{-1}(y)$. If $x\sim_\pi x'$, then $X$ has a $\pi$-transition $\{\vec x_m\}$ from $x$ to $x'$. It is immediate that all $\phi(\vec x_m)$ and $\phi(x)=\phi(x')=y$ are bi-asymptotic. Since $\psi(\phi(\vec x_m))=\psi(y)=z$ and $\psi$ is finite-to-one, we have that $\phi(\vec x_m)=y$ and that $x\sim_\phi\vec x_m\sim_\phi x'$ for all $m$. Therefore $[x]_\pi=[x]_{\phi/\psi}=[x]_\phi$ and $d_\phi=d_{\phi/\psi}$.
  \end{proof}

\begin{ack*}
  The author was supported by Fondecyt project 3130718. The author would like to thank Mahsa Allahbakhshi and Uijin Jung.
\end{ack*}

\bibliographystyle{amsplain}
\bibliography{ref}

\end{document}